\def\Idem{\mathrm{Id}}
\def\Rhom{\mathrm{Rhom}}
\def\B{\mathbf{B}}
\def\Z{\mathbb{Z}}
\def\Q{\mathbb{Q}}
\def\F{\mathbb{F}}
\def\R{\mathbb{R}}
\def\C{\mathbb{C}}
\def\pp{{\mathfrak{p}}}
\def\qq{{\mathfrak{q}}}
\def\rr{{\mathfrak{r}}}
\def\h{{h}}
\def\CC{{\mathbf C}}
\def\rad{\mathrm{rad}}
\def\sep{\mathrm{sep}}
\def\Hom{\mathrm{Hom}}
\def\dim{\mathrm{dim}}
\def\lcm{\mathrm{lcm}}
\def\rank{\mathrm{rank}}
\def\I{{\mathcal I}}
\def\Id{\mathrm{Id}}
\def\ord{\mathrm{ord}}
\def\F{{\mathbb F}}
\def\isom{\xrightarrow{\sim}}
\numberwithin{equation}{section}
\newtheorem{thm}{Theorem}
\numberwithin{thm}{section}
\newtheorem{lem}[thm]{Lemma}
\newtheorem{cor}[thm]{Corollary}
\newtheorem{prop}[thm]{Proposition}
\theoremstyle{definition}
\newtheorem{defn}[thm]{Definition}
\newtheorem{ex}[thm]{Example}
\newtheorem{exs}[thm]{Examples}
\newtheorem{rem}[thm]{Remark}
\newtheorem{rems}[thm]{Remarks}
\numberwithin{equation}{thm}
\title
[Universal gradings of orders]
{Universal gradings of orders}
\author[H.\ W.\ Lenstra, Jr.]{H.\ W.\ Lenstra, Jr.}
\address{Mathematisch Instituut, Universiteit Leiden, The Netherlands}
\email{hwl@math.leidenuniv.nl}
\author[A.\ Silverberg]{A.\ Silverberg}
\address{Department of Mathematics, University of California, Irvine, CA 92697, USA}
\email{asilverb@uci.edu}
\subjclass[2010]{13A02}
\keywords{graded orders, graded rings, lattices}
\thanks{Support for the research was provided by the Alfred P.~Sloan Foundation.
}
\begin{document}

\begin{abstract} 
For commutative rings, we introduce the notion of a {\em universal grading}, which can be viewed as the ``largest possible grading''. While not every commutative ring (or order) has a universal grading, we prove that every {\em reduced order} has a universal grading, and this grading is by a {\em finite} group. Examples of graded orders are provided by group rings of finite abelian groups over rings of integers in number fields. We generalize known properties of nilpotents, idempotents, and roots of unity in such group rings to the case of graded orders; this has applications to cryptography. Lattices play an important role in this paper; a novel aspect is that our proofs use that the additive group of any reduced order can in a natural way be equipped with a lattice structure.
\end{abstract}

\maketitle

\section{Introduction}

In 1940, G. Higman \cite[Theorem 3]{Higman} proved the beautiful result
that if $\Gamma$ is a finite abelian group, then the torsion subgroup
of the group of units of the group ring $\Z[\Gamma]$ equals
$\pm\Gamma$. His proof was remarkable in that it depended on
properties of the absolute value of complex numbers.

In recent work \cite{CMorders} on cryptography, the present authors needed to
use a similar result on rings that are a bit more general than
Higman's group rings, namely {\it graded orders}. Here an 
{\bf order} is a commutative ring $A$ of which the additive group $A^+$
is isomorphic to $\Z^n$ for some $n\in\Z_{\ge0}$, and {\it
graded\/} refers to the familiar notion recalled below; our
gradings will always be by abelian groups. If the order $A$ is
{\bf reduced} in the sense that its nilradical is $0$, then the
group $A^+$ carries a natural {\it lattice structure}. Replacing
Higman's technique by this lattice structure, we were able to
prove basic properties of nilpotents, idempotents, and torsion
units in any graded order, as expressed in Theorem \ref{firstthm} below.

Much to our surprise, we discovered that the same lattice
structure can be used to prove a far more fundamental result on
graded orders. Namely, as our main theorem (Theorem \ref{univgradingorderthm})
asserts, each reduced order $A$ has a {\it universal grading},
which controls {\it all\/} gradings of $A$ and can be thought of
as its ``finest possible'' grading. 
The precise definition is given in Definition \ref{univgradingdefn} below. 
This definition does not appear to occur in the literature, presumably because prior to our discovery no interesting class of examples was known; and 
indeed, many naturally occurring rings fail to have universal gradings.

Our main result suggests a number of promising avenues for further
research. The first is to exhibit a larger class of commutative
rings that have universal gradings. For Higman's original result,
several far-reaching generalizations have been found, notably in
the work of W.~May \cite{May}. Replacing our ``archimedean'' arguments
by arguments with a $p$-adic flavor, one can probably identify
algebraic conditions that ensure the existence of a universal
grading.

Secondly, we hope to show in forthcoming work \cite{GpRings} that the
existence of a universal grading on any reduced order has
important consequences for the problem of how one may write a given
commutative ring as a group ring, a problem that is closely
related to the well-studied subject of isomorphisms between group
rings. Roughly speaking, we prove that, up to isomorphism, there
is a unique ``maximal'' way of realizing a given reduced order as
a group ring. 
Such results are probably also achievable over more general base
rings than the ring of integers.

Third, there is the algorithmic question of designing an
``efficient'' method for computing the universal grading of a
given reduced order, see \cite{DvG}.

Fourth, our main result may be rephrased by saying that there is, in a suitable sense, a ``maximal'' abelian group scheme ``of multiplicative type'' that acts on a given reduced order (see \cite{Oesterle}).
One may wonder whether a similar result
holds for more general finite abelian group schemes. 

In this paper all rings are supposed to be commutative.

\begin{defn}
\label{gradingdefn}
Suppose $\Gamma$ is a multiplicatively written abelian group
with identity element $1$. 
Then a {\bf $\Gamma$-grading} of a ring $A$
is a system $\B = (B_\gamma)_{\gamma\in \Gamma}$ of additive subgroups
$B_\gamma \subset A$  that satisfies:
\begin{enumerate}
\item 
$B_\gamma \cdot B_{\gamma'} \subset B_{\gamma\gamma'}$
for all $\gamma,\gamma'\in \Gamma$, and
\item
$A = \bigoplus_{\gamma\in \Gamma} B_\gamma$
in the sense that the additive group homomorphism  
$\bigoplus_{\gamma\in \Gamma} B_\gamma \to A$ sending 
$(x_\gamma)_{\gamma\in \Gamma}$ to $\sum_{\gamma\in \Gamma}x_\gamma$
is bijective.
\end{enumerate}
\end{defn}

We note that if $R$ is a ring and $\Gamma$ is an abelian group, then
there is a natural $\Gamma$-grading of the group ring $R[\Gamma]$,
given by $(R\cdot\gamma)_{\gamma\in \Gamma}$.

If  $f : \Gamma \to \Delta$ is a
homomorphism of abelian groups, then each $\Gamma$-grading 
$\B = (B_\gamma)_{\gamma\in \Gamma}$ of a ring $A$
gives rise to the $\Delta$-grading 
$(\sum_{\gamma\in f^{-1}(\delta)}B_\gamma)_{\delta\in \Delta}$
of $A$, which we denote by $f_\ast\B$.

\begin{defn}
\label{univgradingdefn}
By a {\bf universal grading} of a ring $A$ we mean a pair
$(\Gamma,\B)$ consisting of an abelian group $\Gamma$ and
a $\Gamma$-grading $\B$ of $A$ with the property that for each
abelian group $\Delta$ and each $\Delta$-grading $\CC$ of $A$
there is a unique group homomorphism
$f : \Gamma \to \Delta$ such that $\CC = f_\ast\B$.
\end{defn}

If a universal grading of $A$ exists, then by a standard argument
it is, in an obvious sense, unique up to a unique isomorphism;
and it exists if and only if the functor
that assigns to an abelian group $\Delta$ the set of 
$\Delta$-gradings of $A$ is representable.

Many naturally occurring rings fail to have a universal grading;
see Examples \ref{nounivgradexs}(i,ii,iii)  
for number fields and finite fields that have no universal grading.
This makes the following result all the more unexpected.

\begin{thm}
\label{univgradingorderthm}
Every reduced order has a universal grading, and 
its universal grading is by a finite abelian group.
\end{thm}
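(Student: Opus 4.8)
The plan is to recognize that a $\Gamma$-grading $\B=(B_\gamma)_\gamma$ is nothing but a ring homomorphism (coaction) $A\to A\otimes_\Z\Z[\Gamma]$, $x\in B_\gamma\mapsto x\otimes\gamma$, i.e.\ an action of the diagonalizable group scheme $\Spec\Z[\Gamma]$ on $\Spec A$, with $f_\ast$ induced by $f\colon\Gamma\to\Delta$ in the obvious way. From this viewpoint the functor $\Delta\mapsto\{\Delta\text{-gradings of }A\}$ is $\Delta\mapsto\Hom(\Spec\Z[\Delta],\underline\Aut(\Spec A))$, and a universal grading is exactly a \emph{largest multiplicative-type subgroup scheme} of $\underline\Aut(\Spec A)$, its character group being the universal $\Gamma$. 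I would carry this out concretely rather than abstractly, but keep the interpretation as a guide. Two preliminary reductions help: first, $1\in B_1$ always, and by the idempotent part of Theorem \ref{firstthm} every idempotent lies in $B_1$, so each grading respects the splitting $A=\prod_i Ae_i$ into connected (domain) factors; since $\Hom(\bigoplus_i\Gamma_{0,i},\Delta)=\prod_i\Hom(\Gamma_{0,i},\Delta)$, a universal grading of $A$ is then the direct sum of universal gradings of the $Ae_i$, reducing us to $A$ a domain.

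Next I would establish finiteness of each individual grading from reducedness and finite rank. If $0\ne x\in B_\gamma$ then $x^k\in B_{\gamma^k}$ is nonzero for every $k$ (there are no nilpotents), while at most $\rk_\Z A$ of the components $B_\delta$ are nonzero (they are independent summands of the finite-rank group $A^+$); hence the powers $\gamma^k$ repeat, so $\gamma$ has finite order, the support is finite, and the subgroup it generates is a finite abelian group. Thus, after replacing $\Gamma$ by the subgroup generated by the support, every grading is by a finite group. Finiteness of the \emph{universal} group is then also clear conceptually: $A\otimes\Q$ is a finite étale $\Q$-algebra, whose automorphism group scheme is finite, so every multiplicative-type subgroup scheme of $\underline\Aut(\Spec A)$ has finite generic fibre and is therefore finite --- no subtorus can occur.

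The crux is to produce a \emph{finest} grading, and this is where the lattice structure is indispensable. I would equip $A^+$ with its canonical positive-definite inner product, coming from the standard Hermitian form on $A\otimes\C\cong\prod_\sigma\C$. Because any $\C$-algebra automorphism of $\C^n$ merely permutes the factors, the automorphisms $\psi\mapsto(\psi(\gamma)\text{ on }B_\gamma\otimes\C)$ attached to a finite grading are unitary; the graded pieces $B_\gamma\otimes\C$, being their common eigenspaces, are therefore mutually orthogonal, so every grading is an orthogonal decomposition of the lattice $A^+$. The heart of the argument is then to show that any two gradings admit a common refinement --- equivalently, that their associated groups of permutation-automorphisms can be simultaneously diagonalized --- so that among all orthogonal ``grading decompositions'' of the fixed lattice $A^+$ there is a finest one $\B_0$, by a finite group $\Gamma_0$ generated by its support. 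I expect this compatibility (maximality) step to be the main obstacle: it is precisely the point where orders behave better than number fields (Examples \ref{nounivgradexs}), and where the rigidity of the integral structure --- that each homogeneous piece is a saturated sublattice spanning its real span --- must be used, so that archimedean lattice geometry, not mere linear algebra over $\Q$, forces the compatibility.

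Granting a finest grading $\B_0$, I would finish by verifying the universal property. Given any $\Delta$-grading $\CC$, the second step lets me assume $\Delta$ finite; compatibility realizes $\CC$ as a coarsening of $\B_0$, which prescribes a homomorphism $f\colon\Gamma_0\to\Delta$ with $f_\ast\B_0=\CC$. Uniqueness of $f$ holds because $\Gamma_0$ is generated by the support of $\B_0$, on which the requirement $f_\ast\B_0=\CC$ determines $f$ outright. This yields representability of the grading functor, hence the universal grading, by a finite abelian group.
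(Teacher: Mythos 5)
Your preparatory steps are sound and largely parallel the paper: the reduction to connected factors via $\Idem(A)=\Idem(B_1)$ (Proposition \ref{idemsequal}) is legitimate, your finiteness argument for the subgroup generated by the support is essentially Lemma \ref{redorderlem} and Proposition \ref{redordercor}, and your unitarity argument for orthogonality of the graded pieces is a valid alternative proof of Proposition \ref{Aorthog}. But the proposal has a genuine gap at precisely the step you yourself flag as ``the main obstacle'': you never prove that any two gradings admit a common refinement, never prove a finest grading exists, and never construct the group $\Gamma_0$. That step is the entire content of the theorem, so what you have is a plan, not a proof. The paper closes this gap with two ideas that are absent from your proposal: first, Eichler's theorem (Theorem \ref{EicherThm}) that every lattice has a \emph{universal} $S$-decomposition into finitely many mutually orthogonal pieces --- a purely lattice-theoretic fact that requires no compatibility argument between gradings, since by Proposition \ref{Aorthog} every grading is automatically a coarsening of it; and second, a generators-and-relations construction of $\Gamma$: take $S$ as generating set, with a relation $s_1s_2=s_3$ whenever there exist $x\in L_{s_1}$, $y\in L_{s_2}$ whose product $xy$ has non-zero $L_{s_3}$-component. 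This presentation is exactly what makes the map on supports induced by an arbitrary $\Delta$-grading extend, uniquely, to a group homomorphism $\Gamma\to\Delta$.

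A second, related flaw: you implicitly identify the finest grading with the finest orthogonal decomposition of the lattice $A^+$, and these differ. For $A=\Z\times\Z$ the finest orthogonal decomposition is $\Z e_1\oplus\Z e_2$ with $e_1=(1,0)$, $e_2=(0,1)$, yet the only grading of $A$ is the trivial one with $B_1=A$: the relations $s_i^2=s_i$ forced by $e_i^2=e_i$ collapse the group. In general the pieces of the universal grading are sums of several indecomposable lattice pieces, with the merging dictated by the multiplicative relations, which is why the paper defines $B_\gamma=\sum_{s\in h^{-1}(\gamma)}L_s$ for a map $h:S\to\Gamma$ that need not be injective. Your final step (``compatibility realizes $\CC$ as a coarsening of $\B_0$, which prescribes a homomorphism $f$'') silently assumes that $\Gamma_0$ carries the freest presentation compatible with the multiplication; constructing that presentation is exactly the missing work. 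A minor further error: your parenthetical ``(domain)'' for the connected factors is false --- a connected reduced order need not be a domain (see Example \ref{orderlatticeex7}) --- though nothing in your argument actually uses it.
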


We prove Theorem \ref{univgradingorderthm} in section \ref{Thm11pf}
(using lemmas given earlier in the paper).

It could be of interest to study non-reduced orders as well. 
In Examples~\ref{nounivgradexs}(vi--viii)
we show that they may have a universal grading by an infinite group, 
or by a finite group, 
or no universal grading at all.
In particular, one cannot omit ``reduced'' from Theorem \ref{univgradingorderthm}.

In section \ref{noncyclicthmpf} we prove the following result,
which answers a question posed by Kiran Kedlaya.

\begin{thm}
\label{noncyclicthm}
Let $A$ be an order that is a Dedekind domain. Then the universal grading of $A$ is by a finite cyclic group.
\end{thm}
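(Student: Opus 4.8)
The plan is to combine the already-established main theorem (Theorem \ref{univgradingorderthm}) with the extra structure imposed by the Dedekind hypothesis. Since $A$ is a reduced order (a domain is reduced), Theorem \ref{univgradingorderthm} guarantees that $A$ has a universal grading $(\Gamma,\B)$ with $\Gamma$ a finite abelian group. So the entire content of the present statement is the assertion that this finite group $\Gamma$ is \emph{cyclic}. My strategy is therefore to extract enough constraints from the Dedekind property to force cyclicity, and the natural route is to show that $\Gamma$ has no subgroup isomorphic to $(\Z/p\Z)^2$ for any prime $p$, or equivalently that every Sylow subgroup of $\Gamma$ is cyclic.

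First I would fix any $\Gamma$-grading $\B=(B_\gamma)_{\gamma\in\Gamma}$ of $A$ and analyze the homogeneous decomposition. The key local observation is that $B_1$, the identity component, is a subring of $A$, and each $B_\gamma$ is a $B_1$-module; moreover, in a domain, multiplication $B_\gamma\times B_{\gamma'}\to B_{\gamma\gamma'}$ is injective in each variable whenever both factors are nonzero, because $A$ has no zero divisors. This injectivity is exactly the lever that is unavailable in the general reduced case, and it is where the Dedekind/domain hypothesis should do its real work. I would use it to show that the \emph{support} $\{\gamma : B_\gamma\neq 0\}$ is closed under multiplication and inverses and hence is a subgroup; since the universal grading is the finest one, universality then forces the support to be all of $\Gamma$, so every $B_\gamma$ is nonzero. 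Combined with injectivity of the multiplication maps, this gives that all the $B_\gamma$ have a common rank as $B_1$-modules.

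Next I would exploit the specifically Dedekind structure: in a Dedekind domain every finitely generated torsion-free module, and in particular every nonzero ideal or fractional ideal, is projective of rank one, and the isomorphism classes of such modules are governed by the ideal class group $\Pic(A)$. The homogeneous components $B_\gamma$ should be identified, up to isomorphism as $B_1$-modules (or after passing to the appropriate localization), with invertible fractional ideals, and the multiplicative structure $B_\gamma\cdot B_{\gamma'}\subset B_{\gamma\gamma'}$ should induce a homomorphism from $\Gamma$ into a group built from these ideal classes — concretely, I expect a natural injection of $\Gamma$ (or of each Sylow subgroup) into a cyclic group, perhaps $\Pic$ of a localization or a group of roots of unity, arising because a one-dimensional base leaves room for at most a cyclic amount of ``twisting.'' The cleanest formulation I would aim for is a faithful action of $\Gamma$ on a rank-one object, whose automorphisms form a cyclic group, thereby embedding $\Gamma$ cyclically.

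The main obstacle, and the step I expect to require the most care, is precisely this last identification: converting the injectivity of homogeneous multiplication into a genuine \emph{embedding} of $\Gamma$ into a cyclic group, rather than merely bounding the ranks. The danger is that one only obtains that each component is invertible without pinning down how the grading group sits inside the automorphisms; one must rule out, say, a $(\Z/2\Z)^2$-grading by showing it would produce two independent order-two symmetries of a rank-one lattice, contradicting that such symmetries form the cyclic group $\{\pm 1\}$. I would therefore organize the proof around a faithful representation of $\Gamma$ on the $\Q$-algebra $A\otimes_\Z\Q$, which is an \'etale $\Q$-algebra; the grading makes $\Gamma$ act through commuting semisimple operators with eigenvalues that are roots of unity, and the finest (universal) grading corresponds to the full diagonalizing torus, whose character group over each residue field is cyclic because the relevant group of roots of unity in a one-dimensional (number-field) setting is cyclic. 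Making this ``roots of unity are cyclic'' input interact correctly with the integrality and Dedekind hypotheses is the delicate point; once it is in place, cyclicity of $\Gamma$ follows.
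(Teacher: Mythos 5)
Your preliminary reductions are sound and in fact parallel the paper's own: in a domain the support $\{\gamma\in\Gamma : B_\gamma\neq 0\}$ is closed under multiplication and (since $\Gamma$ is finite) under inverses, universality forces it to be all of $\Gamma$ (this is the paper's Lemma \ref{gammamisc}), and injectivity of homogeneous multiplication then makes every $B_\gamma$ a rank-one $B_1$-module (the paper proves the corresponding statement for $A_\Q$ in Lemma \ref{noncycliclem}). But the heart of your argument --- converting this structure into an embedding of $\Gamma$ into a cyclic group --- is precisely the step you defer, and the mechanisms you propose for it cannot close the gap, because every ingredient you allow yourself is blind to the one hypothesis that matters: integral closedness of $A$ itself. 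Concretely, consider $A' = \Z \oplus \Z\mathrm{i} \oplus \Z\sqrt{2} \oplus \Z\mathrm{i}\sqrt{2} = \Z[\mathrm{i},\sqrt{2}] \subset \Q(\zeta_8)$. This is a domain graded by $\Z/2\Z\times\Z/2\Z$ with full support; $B_1=\Z$ is Dedekind; every component is an invertible (indeed free) rank-one $B_1$-module; all multiplication maps $B_\gamma\otimes B_{\gamma'}\to B_{\gamma\gamma'}$ are isomorphisms; and the ambient \'etale $\Q$-algebra $\Q(\zeta_8)$ carries the same $(\Z/2\Z)^2$-grading (the paper's Example \ref{nounivgradexs}(i)). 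So no argument built only from the domain property, rank-one/$\Pic$ considerations over $B_1$, or diagonalizable actions and roots of unity on $A_\Q$, can ever produce cyclicity: all of those structures are present in this non-cyclic example. The only hypothesis that fails for $\Z[\mathrm{i},\sqrt{2}]$ is that it is not integrally closed ($\zeta_8=(\sqrt{2}+\mathrm{i}\sqrt{2})/2$ is integral over it), and your outline never invokes integral closedness of $A$.

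For contrast, here is where the paper makes Dedekindness of $A$ (unique factorization of ideals, $e$--$f$ theory) do the work. After reducing, much as you do, to the case where $\Gamma$ is a $p$-group of exponent $q$ and each $(B_\gamma)_\Q$ is one-dimensional over $(B_1)_\Q$, one picks a prime $\pp$ of the Dedekind ring $B_1$ over $p$ and notes that $x\mapsto x^q+\pp A$ is a ring homomorphism $A\to A/\pp A$ (characteristic $p$, $q$ a power of $p$) whose image lies in $B_1/\pp$ because $(B_\gamma)^q\subset B_1$. Its kernel $\rr$ is a prime of $A$ over $\pp$ with $f(\rr/\pp)=1$; moreover every prime $\qq$ over $\pp$ contains $\rr$ (since $x\in\rr$ gives $x^q\in\pp A\subset\qq$), hence equals $\rr$. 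The formula $\sum_\qq e(\qq/\pp)f(\qq/\pp)=\#\Gamma$ then forces total ramification $e(\rr/\pp)=\#\Gamma$, and choosing $x$ with $\ord_\rr(x)=1$, the relation $x^q\in\pp A=\rr^{\#\Gamma}$ yields $q\ge\#\Gamma$; a finite abelian group whose exponent is at least its order is cyclic. Any repair of your approach must likewise locate a step that fails for non-maximal orders such as $\Z[\mathrm{i},\sqrt{2}]$; the cyclicity of groups of roots of unity in number fields will not supply it.
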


Suppose $A$ is a ring.  
The set of nilpotent elements of $A$ is an ideal of $A$, 
denoted $\sqrt{0}$ or $\sqrt{0_A}$
and called the {\bf nilradical}. 
We call $x \in A$ an {\bf idempotent} if $x^2 = x$.
We denote the set of idempotents by $\Idem(A)$,
and we call
$A$  {\bf connected} if $\#\Idem(A) = 2$ or, equivalently,
if one has $\Id(A) = \{ 0,1\}$ and $A\neq 0$.
We call $x\in A$ a {\bf root of unity} if $x^n=1$ for some $n\in\Z_{>0}$.
The set of roots of unity of $A$, which is a subgroup of the
group $A^\ast$ of units of $A$, is denoted by $\mu(A)$.

Let $A$ be a ring and let $(B_\gamma)_{\gamma\in \Gamma}$
be a  $\Gamma$-grading of $A$.
Then the subgroup $B_1$ of $A$ is a subring of $A$
that contains the identity element
of $A$ (see Lemma \ref{B1ring}).
We shall call an additive subgroup $H \subset A$
{\bf homogeneous} if for each
$(x_\gamma)_{\gamma\in \Gamma} \in \bigoplus_{\gamma\in \Gamma} B_\gamma$
one has that 
$\sum_{\gamma\in \Gamma}x_\gamma$ is in $H$ if and only if
each $x_\gamma$ is in $H$ (i.e., 
$H = \bigoplus_{\gamma\in \Gamma} (H \cap B_\gamma)$ via the
bijection in (ii) above).
This terminology will in particular be applied to ideals
and to subrings of $A$.
An  element of $A$ is called {\bf homogeneous} if
it belongs to $\bigcup_{\gamma\in \Gamma} B_\gamma$.

\begin{thm}
\label{firstthm}
Let $\Gamma$ be an abelian group,
and let $A$ be an order with $\Gamma$-grading
$(B_\gamma)_{\gamma\in \Gamma}$. Then:
\begin{enumerate}
\item 
the nilradical $\sqrt{0_A}$ is a homogeneous ideal of $A$;
\item
$\Idem(A) = \Idem(B_1)$, and $A$ is
connected if and only if $B_1$ is connected;
\item
if $B_1$ is connected, then each element of $\mu(A)$ is homogeneous.
\end{enumerate}
\end{thm}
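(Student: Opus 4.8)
The plan is to handle the three parts in the order (i), (ii), (iii), using (i) to pass to the reduced quotient in the other two, and then bringing the lattice structure on a reduced order to bear where it matters most. Write $A_\Q = A\otimes_\Z\Q$, and observe that $B_\gamma = A\cap(B_\gamma\otimes\Q)$ and $\sqrt{0_A}=A\cap\sqrt{0_{A_\Q}}$; I will also use repeatedly that for a \emph{reduced} order the support of any grading generates a \emph{finite} subgroup of $\Gamma$ (otherwise a torsion-free quotient would give a grading by an ordered group, forcing the extremal homogeneous pieces to be square-zero, hence $0$), so in the reduced case one may assume $\Gamma$ finite.

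For (i) I would show that the homogeneous ideal criterion holds after tensoring with $\Q$. Reading the grading as the coaction $\rho\colon A_\Q\to A_\Q\otimes_\Q\Q[\Gamma]$, $\sum_\gamma x_\gamma\mapsto\sum_\gamma x_\gamma\otimes\gamma$, an ideal $I$ is homogeneous exactly when $\rho(I)\subseteq I\otimes\Q[\Gamma]$. Since $\rho$ is a ring homomorphism it sends nilpotents to nilpotents, so it suffices to know $\sqrt{0_{A_\Q\otimes\Q[\Gamma]}}=\sqrt{0_{A_\Q}}\otimes\Q[\Gamma]$; this holds because in characteristic $0$ the ring $(A_\Q)_{\red}\otimes_\Q\Q[\Gamma]$ is a product of group algebras of abelian groups over number fields, hence reduced. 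This proves (i) for arbitrary abelian $\Gamma$, with no finiteness or orderability hypothesis.

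For (ii) the inclusion $\Idem(B_1)\subseteq\Idem(A)$ is immediate since $B_1$ is a subring containing $1$ by Lemma \ref{B1ring}, so the point is $\Idem(A)\subseteq B_1$. In the reduced case $A\otimes\C=\C^N$ with $N=\rank A$, the coordinates being the ring homomorphisms $\sigma_j\colon A\to\C$, and the finite group $\ch{\Gamma}$ acts by permuting these coordinates. For $e\in\Idem(A)$ I would average over $\ch{\Gamma}$ to produce the degree-$1$ component $e_1=\pi_1(e)\in B_1$; the averaging formula shows each $\sigma_j(e_1)$ is rational and lies in $[0,1]$, while $e_1$ is an algebraic integer, so every factor of $e_1$ lies in $\Z\cap[0,1]=\{0,1\}$. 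Thus $e_1$ is idempotent, and comparing coordinates gives $e_1=e\in B_1$. For a general order I would reduce modulo $\sqrt{0_A}$ (homogeneous by (i)): idempotents lift uniquely along the nil ideal $\sqrt{0_A}$ in $A$ and in $B_1$, and the reduced case puts $\bar e$ in the degree-$1$ part of $A_{\red}$, so the unique idempotent lift of $\bar e$ inside $B_1$ must equal $e$. The equivalence ``$A$ connected $\iff B_1$ connected'' then follows from $\Idem(A)=\Idem(B_1)$.

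For (iii) the lattice is decisive. Assuming $B_1$ connected, part (ii) forces $A$ connected, so in the reduced case $L=A_\Q$ is a number field and $A$ an order in $L$, carrying the inner product $\langle x,y\rangle=\sum_{j=1}^{N}\sigma_j(x)\overline{\sigma_j(y)}$. Two facts drive the argument: the grading is \emph{orthogonal}, because $\ch{\Gamma}$ acts on $A\otimes\C=\C^N$ by coordinate permutations, hence unitarily, so the eigenspaces $B_\gamma\otimes\C$ for distinct $\gamma$ are orthogonal; and \emph{every} nonzero $v\in A$ satisfies $\|v\|^2\ge N$, by the arithmetic--geometric mean inequality applied to $\|v\|^2=\sum_j|\sigma_j(v)|^2$ together with $|\N_{L/\Q}(v)|\ge1$. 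For $\zeta\in\mu(A)$ one has $|\sigma_j(\zeta)|=1$ for all $j$, so $\|\zeta\|^2=N$; orthogonality then yields $N=\sum_\gamma\|\zeta_\gamma\|^2\ge N\cdot\#\{\gamma:\zeta_\gamma\neq0\}$, whence $\zeta$ is homogeneous. To drop reducedness I would pass to $A_{\red}$, use $\mu(A)\hookrightarrow\mu(A_{\red})$ in characteristic $0$, and show the unique root-of-unity lift of a homogeneous $\bar\zeta$ of order $m$ is homogeneous: lifting $\bar\zeta$ to a homogeneous unit $w$ in degree $\gamma$ (with $\gamma^m=1$), the element $\nu_0=w^m-1$ is nilpotent and lies in degree $1$, so $u=(1+\nu_0)^{-1/m}$ (a finite binomial series) lies in degree $1$, and $wu$ is a homogeneous root of unity reducing to $\bar\zeta$, hence equal to $\zeta$ by uniqueness. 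The main obstacle, and the genuine novelty, is the inequality $\|v\|^2\ge N$ for nonzero $v$ in a connected reduced order: this is exactly the lattice-theoretic substitute for Higman's absolute-value argument, and it is what forces the hypothesis that $A$ (equivalently $B_1$) be connected. The orthogonality of the grading is the other indispensable input, while the reductions to the reduced case in (ii) and (iii)—careful but standard unique-lifting statements along $\sqrt{0_A}$—are comparatively routine once (i) makes $A_{\red}$ graded.
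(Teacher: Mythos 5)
Your parts (i) and (ii) are correct, and both take routes genuinely different from the paper's: for (i) you use the coaction $\rho\colon A_\Q\to A_\Q\otimes_\Q\Q[\Gamma]$ and the reducedness of abelian group algebras in characteristic $0$, which avoids the paper's cyclotomic base change and character-averaging (Lemma \ref{fchithm}) entirely and needs no finiteness reduction; for (ii) your averaging-plus-integrality argument (each $\sigma(e_1)$ is a rational algebraic integer in $[0,1]$, hence in $\{0,1\}$, and an average of $0$'s and $1$'s equal to $0$ or $1$ forces all terms equal, so $\sigma(e)=\sigma(e_1)$ for every $\sigma$) is a clean substitute for the paper's route through decompositions of $1$ (Proposition \ref{redorder}) and orthogonality. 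Your finiteness reduction for reduced orders and the unique-lifting arguments along $\sqrt{0_A}$ are also fine, as is (modulo working in $A_\Q$, where the binomial coefficients live) your reduction of (iii) to the reduced case.

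However, the reduced case of (iii) --- which is the heart of the theorem --- rests on a false claim. You assert that a connected reduced order $A$ has $A_\Q$ a number field, and deduce that $\|v\|^2\ge N=\rank A$ for every nonzero $v\in A$ via AM--GM and $|\N_{L/\Q}(v)|\ge 1$. Connectedness only says $A$ itself has no nontrivial idempotents; $A_\Q$ may have many, they just fail to lie in the (non-maximal) order $A$. The paper's Example \ref{orderlatticeex7} is an explicit counterexample to both claims: the subring $A\subset\Z^5$ of vectors whose coordinates share the same parity is connected and reduced with $A_\Q=\Q^5$, and $x=(2,0,0,0,0)\in A$ has $\langle x,x\rangle=4<5=\rank A$ (here $\prod_\sigma\sigma(x)=0$, so the norm bound is unavailable). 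Indeed Remark \ref{exampremrtunity} of the paper is devoted precisely to warning that this Higman-style argument breaks down and that ``more is required.'' What AM--GM really gives is only the weaker Lemma \ref{xxgesigmalem}, $\langle x,x\rangle\ge\#\{\sigma\in\Rhom(A,\C):\sigma(x)\neq 0\}$. The paper's repair, which your argument is missing, is to trade the false norm inequality for \emph{indecomposability} in the lattice: connectedness makes $1$ indecomposable (Corollary \ref{Aconnindecomp}, via the bijection of Proposition \ref{redorder} between idempotents and decompositions of $1$); multiplication by $\zeta\in\mu(A)$ is an isometry of the lattice $A$ since $|\sigma(\zeta)|=1$ for all $\sigma$, so $\zeta$ is indecomposable too; and an indecomposable element is homogeneous because the grading is orthogonal (Lemma \ref{indecomponepiece}). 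Without this (or some equivalent idea), your proof of (iii) does not go through.
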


The three parts of Theorem \ref{firstthm} are proved in
Propositions \ref{AGamma}(iii), \ref{idemsequal},  and \ref{connhomogprop},
respectively.
Note that Theorem \ref{firstthm}(iii) is clearly false if the connectedness assumption is dropped.

In the case that $A$ is a group ring $B[\Gamma]$ with its natural $\Gamma$-grading,
with $B$ an order and $\Gamma$ a finite abelian group,
Theorem \ref{firstthm} 
was known and can be deduced 
from results in \cite{May}
(Proposition 2 of  \cite{May} for (i), 
the Corollary to Proposition 3 for (ii), 
and the Corollary to Proposition 10 for (iii)).

We end the introduction with two important classes of examples of graded rings.
\begin{ex}[{\em Kummer extensions}]
Let $K\subset L$ be a field extension, and let $W$ be the set of $a\in L^\ast$ for which there exists $n \in \Z_{>0}$ such that $a^n\in K^\ast$ and $K$ contains a primitive $n$-th root of unity. Then $W$ is a subgroup of $L^\ast$ containing $K^\ast$, and the subfield $K(W)$ of $L$ is graded by the group $W/K^\ast$; here the piece of degree $aK^\ast\in  W/K^\ast$ is the one-dimensional $K$-vector space $Ka$. 
This example illustrates that finding a grading for a field extension is closely related to the classical problem of generating the field by means of radicals. 
\end{ex}

\begin{ex}[{\em Extended tensor algebras}]
Suppose $A$ is a commutative ring and $L$ is a projective $A$-module of rank 1. 
For $i\in \Z$, let $L^{{\otimes}i}$ denote the $i$-th tensor power of $L$, 
where for negative values of $i$ 
we define $L^{{\otimes}i}=\Hom_A(L^{{\otimes}-i},A)$. 
Then the extended tensor algebra 
$\Lambda=\bigoplus_{i \in \Z}L^{{\otimes}i}$ is graded by an infinite cyclic group.
If $r\in \Z_{>0}$ and $L^{{\otimes}r}$ is free, say $L^{\otimes r}=Ay$, then the ring 
$B=\Lambda/(y-1)\Lambda$ is graded by a cyclic group of order $r$, since 
$B=\bigoplus_{i=0}^{r-1}L^{{\otimes}i}$. 
This class of examples includes the graded orders that we encountered in 
lattice-based cryptography,
and that play crucial roles in the proofs of the main results in \cite{LwS,CMorders}.
More precisely,
Theorem \ref{firstthm}(ii,iii) 
supplies the proof of Proposition 14.3(iv) of \cite{CMorders}.
\end{ex}

\subsection*{Acknowledgments}
We thank Warren May for providing the references to \cite{May},
Bas Edixhoven for helpful comments,
Daan van Gent for Example \ref{nounivgradexs}(i),
and Kiran Kedlaya for helpful comments and
for posing a question that led to Theorem \ref{noncyclicthm}.

\section{Graded rings}

In this section we give some relatively straightforward
lemmas that we will use to prove our main results.
The proofs of Theorems \ref{univgradingorderthm}
and \ref{firstthm} depend on two techniques. 
One, mentioned earlier, depends
on the introduction of a natural lattice structure on any
reduced order.
The other (Lemma \ref{fchithm} below) consists of equipping a 
$\Gamma$-graded ring with an action by the dual of $\Gamma$,
after a suitable cyclotomic base change;
here $\Gamma$ is finite.

\begin{lem}
\label{B1ring}
Suppose $A$ is a  
ring, $\Gamma$ is an abelian group, and 
$(B_\gamma)_{\gamma\in \Gamma}$ is a $\Gamma$-grading of $A$.
Then: 
\begin{enumerate}
\item 
$1\in B_1$,
\item
$B_1$ is a ring, and 
\item
each $B_\gamma$ is a $B_1$-module.
\end{enumerate}
\end{lem}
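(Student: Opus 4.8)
The plan is to prove the three parts in order, since each one feeds the next. The key observation driving everything is that if $1 = \sum_{\gamma} e_\gamma$ is the decomposition of the identity into its homogeneous pieces (with $e_\gamma \in B_\gamma$), then multiplying by $1$ acts as the identity, and the grading axiom (i) forces a strong constraint on the $e_\gamma$.

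First I would prove (i), that $1 \in B_1$. Write $1 = \sum_{\gamma \in \Gamma} e_\gamma$ with $e_\gamma \in B_\gamma$, where all but finitely many $e_\gamma$ vanish. The idea is to show $e_\gamma = 0$ for $\gamma \neq 1$. Fix any $\delta \in \Gamma$ and any $x \in B_\delta$. Then $x = 1 \cdot x = \sum_\gamma e_\gamma x$, and by axiom (i) we have $e_\gamma x \in B_{\gamma\delta}$. Thus $\sum_\gamma e_\gamma x$ is a decomposition of $x$ into homogeneous components, indexed so that the $\gamma$-th term lands in degree $\gamma\delta$. Since $x$ already lies in $B_\delta$, the uniqueness of the homogeneous decomposition guaranteed by axiom (ii) forces $e_\gamma x = 0$ whenever $\gamma\delta \neq \delta$, i.e. whenever $\gamma \neq 1$, and $e_1 x = x$. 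Applying this with $x = e_\delta$ for each $\delta$ and summing over $\delta$ gives $e_\gamma = e_\gamma \cdot 1 = \sum_\delta e_\gamma e_\delta = 0$ for each $\gamma \neq 1$, since every term $e_\gamma e_\delta$ vanishes by the previous sentence (taking the roles appropriately). Hence $1 = e_1 \in B_1$.

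Next, for (ii), I would observe that $B_1$ is an additive subgroup by definition, it is closed under multiplication because axiom (i) gives $B_1 \cdot B_1 \subset B_{1\cdot 1} = B_1$, and it contains the identity $1$ by part (i). These are exactly the conditions for $B_1$ to be a subring of $A$ containing the identity. Finally, for (iii), each $B_\gamma$ is an additive subgroup, and axiom (i) gives $B_1 \cdot B_\gamma \subset B_{1\cdot\gamma} = B_\gamma$, so $B_\gamma$ is closed under multiplication by elements of the ring $B_1$; together with the $B_1$-bilinearity inherited from the ring structure of $A$, this makes $B_\gamma$ a $B_1$-module.

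The only genuinely delicate step is part (i); the main obstacle is handling the bookkeeping of the homogeneous decomposition of $1$ carefully, since one must extract the relation $e_\gamma x = 0$ for $\gamma \neq 1$ from the uniqueness in axiom (ii) and then apply it in the right way to conclude $e_\gamma = 0$. Parts (ii) and (iii) are then immediate formal consequences of the grading axioms and require no further work.
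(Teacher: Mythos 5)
Your proof is correct and follows essentially the same route as the paper: decompose $1$ into homogeneous components, multiply against an arbitrary homogeneous element, and compare components using the uniqueness in axiom (ii). The paper finishes by noting that the degree-$1$ component acts as the identity on all of $A$ and hence equals $1$, whereas you show directly that the components of degree $\neq 1$ vanish; these are two immediate readings of the same component comparison, and parts (ii) and (iii) are handled identically.
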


\begin{proof}
Write $1=(1_\gamma)_{\gamma\in \Gamma} \in A$. 
Take any $\delta\in \Gamma$ and $\alpha\in B_\delta$.
Then
$
\alpha = 1\cdot\alpha = 
(1_\gamma)_{\gamma\in \Gamma}\cdot(\alpha_\gamma)_{\gamma\in \Gamma}
$
where $\alpha_\delta = \alpha$ and $\alpha_\gamma = 0$
for all $\gamma \neq \delta$.
Comparing $\delta$-coordinates we have
$\alpha = 1_1\cdot\alpha$, and likewise
$\alpha = \alpha\cdot 1_1$.
So $1_1$ acts left and right as the identity on each $B_\delta$,
and hence on $A$. Thus, $1 = 1_1 \in B_1$, proving (i).
Parts (ii) and (iii) are straightforward.
\end{proof}

If $\Gamma$ is an abelian group and $k\in\Z$, let 
$\Gamma^k = \{ \gamma^k : \gamma \in \Gamma\}$.
The following two lemmas will be used to prove Proposition \ref{Esephomog} and
Proposition \ref{redordercor}, respectively.

\begin{lem}
\label{gammamischomog}
Suppose 
$\Gamma$ is an abelian group,  
$\B = (B_\gamma)_{\gamma\in \Gamma}$ is a $\Gamma$-grading of a commutative ring $A$,
and the set $S = \{ \gamma\in \Gamma : B_\gamma \neq 0 \}$ is finite.
Then there are a finite abelian group $\Delta$ and a
$\Delta$-grading $\CC = (C_\delta)_{\delta\in \Delta}$ of $A$ such that
$\bigcup_{\gamma\in \Gamma}B_{\gamma} =
\bigcup_{\delta\in \Delta}C_{\delta}.$
\end{lem}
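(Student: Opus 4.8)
The plan is to realize the required finite grading as the pushforward $f_\ast\B$ of $\B$ along a suitable group homomorphism $f$, chosen so that $f$ is \emph{injective on the support} $S$. Injectivity on $S$ is exactly the property one needs: for a homomorphism $f\colon\Gamma\to\Delta$ the pushforward $f_\ast\B=(C_\delta)_{\delta\in\Delta}$ has $C_\delta=\sum_{\gamma\in f^{-1}(\delta)}B_\gamma$, and since $B_\gamma=0$ for $\gamma\notin S$, injectivity of $f$ on $S$ forces each such sum to have at most one nonzero term. Thus every $C_\delta$ equals either $0$ or a single $B_\gamma$, whence $\bigcup_{\delta}C_\delta=\bigcup_{\gamma}B_\gamma$, which is precisely the assertion. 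The entire task therefore reduces to producing such an $f$ with $\Delta$ \emph{finite}.

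One cannot in general take $f$ to be a quotient map of $\Gamma$ itself: the group $\Gamma$ may have no nontrivial finite quotients at all (for instance $\Gamma=\Q$ can occur as a grading group while $S$ still has more than one element), so no homomorphism from $\Gamma$ to a finite group could separate two distinct points of $S$. The key maneuver is therefore to shrink the grading group first. Let $\Gamma'=\langle S\rangle\subset\Gamma$ be the subgroup generated by $S$. Since the nonzero pieces of $\B$ are already indexed by $\Gamma'$, the family $(B_\gamma)_{\gamma\in\Gamma'}$ is a $\Gamma'$-grading of $A$ with the same homogeneous elements, and now $\Gamma'$ is \emph{finitely generated}.

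Finitely generated abelian groups are residually finite, and this is what rescues the argument. Writing $\Gamma'\cong\Z^r\oplus T$ with $T$ finite, I would consider the finite set $D=\{\gamma(\gamma')^{-1}:\gamma,\gamma'\in S,\ \gamma\neq\gamma'\}\subset\Gamma'\setminus\{1\}$ and produce a finite-index subgroup $N\subset\Gamma'$ with $N\cap D=\varnothing$; concretely one may take $f\colon\Gamma'\to(\Z/M\Z)^r\oplus T$ reducing the free part modulo a single integer $M$ exceeding the absolute value of every coordinate of the free parts of the (finitely many) elements of $D$, so that no $d\in D$ lies in $\ker f$. With $\Delta$ equal to this finite group (equivalently $\Gamma'/N$), the map $f$ is injective on $S$, and $\CC=f_\ast\B$ is the desired $\Delta$-grading by the pushforward construction recalled just before Definition~\ref{univgradingdefn}. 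The main point to get right is exactly the passage from $\Gamma$ to $\Gamma'$: the residual finiteness that makes the separation of $S$ possible becomes available only after restricting to the finitely generated subgroup generated by the support, and a direct pushforward from $\Gamma$ would in general fail. Everything else—the verification that the restriction is a grading and that the nonzero pieces $C_\delta$ coincide with the original $B_\gamma$—is routine.
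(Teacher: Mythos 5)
Your proof is correct and takes essentially the same route as the paper: replace $\Gamma$ by the finitely generated subgroup $\langle S\rangle$, then push $\B$ forward along a homomorphism to a finite abelian group that is injective on $S$, noting that injectivity on the support makes each piece of the pushforward either $0$ or a single $B_\gamma$. The only (cosmetic) difference is in how the separating quotient is produced: the paper picks, for each pair $s\neq t$ in $S$, an integer $N_{s,t}$ with $st^{-1}\notin\Gamma^{N_{s,t}}$, sets $M=\lcm$ of these, and uses the canonical map $c:\Gamma\to\Gamma/\Gamma^M$, whereas you invoke the structure theorem $\langle S\rangle\cong\Z^r\oplus T$ and reduce the free part modulo one large integer.
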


\begin{proof}
We can and do replace $\Gamma$ with $\langle S \rangle$.
Since $\{ 1\} = \bigcap_{N\in\Z_{>0}}\Gamma^N$, if
$s,t\in S$ with $s\neq t$ then there exists $N_{s,t}\in\Z_{>0}$
such that $st^{-1}\notin \Gamma^{N_{s,t}}$.
Let $M = \lcm_{s,t\in S, s\neq t}\{ N_{s,t}\}$,
let $c : \Gamma \to \Gamma/\Gamma^M$ be the canonical projection map,
and let $\CC = c_\ast\B = (C_\delta)_{\delta\in \Gamma/\Gamma^M}$.
By construction, the restriction of $c$ to $S$ is injective,
and the desired result now follows with $\Delta=\Gamma/\Gamma^M$.
\end{proof}

\begin{lem}
\label{gammamisc}
Suppose $A$ is a commutative ring,
$\Gamma$ is an abelian group,  
$\B = (B_\gamma)_{\gamma\in \Gamma}$ is a $\Gamma$-grading of $A$,
and $(\Gamma,\B)$ is universal.
Then $\Gamma = \langle \gamma\in \Gamma : B_\gamma \neq 0 \rangle$.
\end{lem}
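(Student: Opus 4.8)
The plan is to derive $\Gamma=\langle S\rangle$, where $S=\{\gamma\in\Gamma:B_\gamma\neq0\}$, by playing the universal property against itself. Write $\Gamma'=\langle S\rangle$ and let $\iota:\Gamma'\hookto\Gamma$ denote the inclusion. The whole argument is formal: it uses only the definition of the pushforward $f_\ast$ and, crucially, the \emph{uniqueness} half of Definition \ref{univgradingdefn}.

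First I would observe that $\B$ is tautologically already a $\Gamma'$-grading. Indeed, any $\gamma\notin\Gamma'$ lies outside $S$ (since $S\subset\Gamma'$), so $B_\gamma=0$; hence setting $B'_\gamma=B_\gamma$ for $\gamma\in\Gamma'$ defines a $\Gamma'$-grading $\B'=(B'_\gamma)_{\gamma\in\Gamma'}$ of $A$. Unwinding the definition of pushforward and using that $\iota$ is injective (so that $\iota^{-1}(\gamma)$ is $\{\gamma\}$ for $\gamma\in\Gamma'$ and empty otherwise), one checks directly that $\iota_\ast\B'=\B$.

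Next I would feed $\B'$ into the universal property of $(\Gamma,\B)$. Taking $\Delta=\Gamma'$ and $\CC=\B'$, there is a unique homomorphism $f:\Gamma\to\Gamma'$ with $f_\ast\B=\B'$. Composing with $\iota$ and invoking the functoriality $(\iota\circ f)_\ast=\iota_\ast\circ f_\ast$ (an immediate consequence of the definition of pushforward), I obtain $(\iota\circ f)_\ast\B=\iota_\ast\B'=\B=\id_\ast\B$. Now the uniqueness clause of the universal property, applied to the $\Gamma$-grading $\B$ itself, forces $\iota\circ f=\id_\Gamma$, because both $\iota\circ f$ and $\id_\Gamma$ are homomorphisms $\Gamma\to\Gamma$ carrying $\B$ to $\B$ and there can be only one such. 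Since $\iota\circ f=\id_\Gamma$ makes $\iota$ surjective, its image $\Gamma'$ is all of $\Gamma$, which is the desired conclusion.

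I do not expect any genuine obstacle here; the result is really a representability formality. The only points deserving a moment's care are the verification that $\iota_\ast\B'=\B$ and the functoriality $(\iota\circ f)_\ast=\iota_\ast\circ f_\ast$, both of which are routine unwindings of the definition of $f_\ast$. The conceptual content is simply that, $(\Gamma,\B)$ being universal, the ``endomorphism of $\B$'' is unique, and this uniqueness pins down $\iota\circ f$ as the identity.
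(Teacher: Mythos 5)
Your argument is correct, but it takes a genuinely different route from the paper's: you pass to the subgroup generated by the support, while the paper passes to the quotient by it. In the paper's proof one sets $\Delta=\Gamma/\Gamma'$ where $\Gamma'=\langle\gamma\in\Gamma:B_\gamma\neq0\rangle$, and lets $t,c:\Gamma\to\Delta$ be the trivial and the canonical maps; since every $\gamma$ with $B_\gamma\neq0$ lies in $\Gamma'=\ker c$, both pushforwards $t_\ast\B$ and $c_\ast\B$ concentrate all of $A$ in degree $1$, hence are equal, and the \emph{uniqueness} clause of Definition \ref{univgradingdefn} forces $t=c$, i.e.\ $\Delta=\{1\}$. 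Your proof instead restricts $\B$ to a $\Gamma'$-grading $\B'$, uses the \emph{existence} clause to produce $f:\Gamma\to\Gamma'$ with $f_\ast\B=\B'$, and then uses uniqueness (with $\Delta=\Gamma$ and $\CC=\B$) to force $\iota\circ f=\id_\Gamma$, so that the inclusion $\iota$ is surjective. Both arguments are formal, and both implicitly use that pushforwards of gradings are gradings (the first lemma of Section \ref{lemexsect}); the paper's version is leaner, needing only the uniqueness half of the universal property plus a one-line computation, whereas yours additionally requires checking that $\B'$ is a grading, that $\iota_\ast\B'=\B$, and the functoriality $(\iota\circ f)_\ast=\iota_\ast\circ f_\ast$ --- all routine, as you say. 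In exchange, your argument yields slightly more: an explicit retraction $f$ of $\iota$, exhibiting the universal grading as literally supported on $\Gamma'$ rather than merely showing that the quotient by $\Gamma'$ is trivial.
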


\begin{proof}
Put $\Delta = \Gamma/\langle \gamma\in \Gamma : B_\gamma \neq 0 \rangle$,
and let $t,c : \Gamma \to \Delta$ be the trivial and the canonical map,
respectively. Then $t$ and $c$ agree on each $\gamma$ with $B_\gamma \neq 0$,
so $t_\ast \B = c_\ast \B$, and by universality one gets $t=c$ so $\Delta = \{ 1\}$.
\end{proof}

We will use the next lemma to prove Lemma \ref{redorderlem} and
Proposition \ref{Esephomog}.

\begin{lem}
\label{orderlem}
Suppose $\Gamma$ is an abelian group, $A$ is either
a commutative $\Q$-algebra with $\dim_\Q A < \infty$ or an order, and 
$(B_\gamma)_{\gamma\in \Gamma}$ is a $\Gamma$-grading of $A$.
Then $B_\gamma = 0$ for all but finitely many $\gamma\in \Gamma$.
\end{lem}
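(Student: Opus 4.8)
The plan is to observe that this lemma concerns only the additive structure of $A$: by condition (ii) of Definition \ref{gradingdefn} we have an internal direct sum decomposition $A^+ = \bigoplus_{\gamma\in\Gamma} B_\gamma$ of abelian groups, and the entire task is to bound the size of the support $S = \{\gamma\in\Gamma : B_\gamma \neq 0\}$. The ring structure and condition (i) of the grading play no role whatsoever; what matters is that in both cases $A^+$ is a \emph{torsion-free abelian group of finite rank}.

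First I would treat the two hypotheses uniformly. If $A$ is an order, then by definition $A^+ \cong \Z^n$, which is torsion-free of rank $n$. If instead $A$ is a commutative $\Q$-algebra with $\dim_\Q A = m < \infty$, then $A^+$ is an $m$-dimensional $\Q$-vector space, hence again torsion-free, now of rank $m$. Writing $\rank(G) = \dim_\Q(G\otimes_\Z\Q)$ for the torsion-free rank, I set $r = \rank(A^+)$, which is finite in either case.

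Next I would invoke additivity of rank over direct sums. Since $-\otimes_\Z\Q$ is exact and commutes with arbitrary direct sums, tensoring $A^+ = \bigoplus_{\gamma} B_\gamma$ with $\Q$ gives $A^+\otimes_\Z\Q = \bigoplus_{\gamma} (B_\gamma\otimes_\Z\Q)$, and taking $\Q$-dimensions yields $r = \sum_{\gamma\in\Gamma}\rank(B_\gamma)$. Finally, each $B_\gamma$ is a subgroup of the torsion-free group $A^+$ and is therefore itself torsion-free, so $B_\gamma \neq 0$ forces $\rank(B_\gamma)\geq 1$. Comparing this with the displayed equality, I conclude that $S$ has at most $r$ elements and is hence finite, which is the assertion.

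The proof presents no genuine obstacle; the only point requiring a little care is that the index set $\Gamma$ may be infinite, so I must use additivity of rank over an \emph{arbitrary} direct sum rather than a finite one, which is immediate from the commutation of $-\otimes_\Z\Q$ with direct sums. I would also keep the two hypotheses (order versus finite-dimensional $\Q$-algebra) merged under the single heading ``torsion-free of finite rank'' in order to avoid duplicating the argument.
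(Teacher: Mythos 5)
Your proof is correct and takes essentially the same approach as the paper, whose entire argument is that $A = \bigoplus_{\gamma\in\Gamma} B_\gamma$ has finite $\Z$-rank (order case) or finite $\Q$-dimension ($\Q$-algebra case), so only finitely many summands can be non-zero. Your write-up merely makes explicit the details the paper leaves implicit, namely additivity of rank across the direct sum via $-\otimes_\Z\Q$ and the fact that a non-zero subgroup of a torsion-free group has rank at least $1$.
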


\begin{proof}
This holds since $A = \bigoplus_{\gamma\in \Gamma} B_\gamma$,
and $A$ has finite $\Z$-rank (if $A$ is an order) or  finite $\Q$-dimension 
(if $A$ is a finite dimensional commutative $\Q$-algebra).
\end{proof}

Suppose $k\in\Z_{>0}$. With
$\Phi_k$ denoting the $k$-th cyclotomic polynomial and $\zeta_k = X+(\Phi_k)$,
we have 
$
\Z[\zeta_k] = \Z[X]/(\Phi_k) = \bigoplus_{i=0}^{\varphi(k)-1} \Z\cdot\zeta_k^i,
$ where $\varphi$ is the Euler $\varphi$-function.
Suppose $A$ is a  
ring, $\Gamma$ is an abelian group,  and
$(B_\gamma)_{\gamma\in \Gamma}$ is a $\Gamma$-grading of $A$.
Then
$B_\gamma[\zeta_k] = B_\gamma\otimes_\Z \Z[\zeta_k]$ is a module over $B_1[\zeta_k]$ 
for all $\gamma\in \Gamma$, and
$
A[\zeta_k] = A\otimes_\Z \Z[\zeta_k] = 
\bigoplus_{\gamma\in \Gamma} (B_\gamma[\zeta_k])
$
is a $\Gamma$-graded ring that contains $A$.
If $\Gamma$ is finite of exponent dividing $k$, 
we let 
$$
\hat{\Gamma}_k = \Hom(\Gamma,\langle \zeta_k \rangle),
$$
a multiplicative group with $\#\hat{\Gamma}_k = \#\Gamma$.
We use the next lemma to prove Propositions \ref{AGamma} and \ref{Aorthog}.

\begin{lem}
\label{fchithm}
Suppose $A$ is a  
ring, $\Gamma$ is a finite abelian group,  
$(B_\gamma)_{\gamma\in \Gamma}$ is a $\Gamma$-grading of $A$, 
and 
$k$ is a positive integer divisible by the exponent of $\Gamma$.
For $\chi\in\hat{\Gamma}_k$, and 
$\alpha = (\alpha_\gamma)_{\gamma\in \Gamma} \in A[\zeta_k]$
with $\alpha_\gamma \in B_\gamma[\zeta_k]$, define
$$
\chi\ast\alpha = (\chi(\gamma)\cdot\alpha_\gamma)_{\gamma\in \Gamma} \in A[\zeta_k].$$
This defines an action of $\hat{\Gamma}_k$ on $A[\zeta_k]$ by
ring automorphisms, and for all $\delta\in \Gamma$ 
and $\alpha = (\alpha_\gamma)_{\gamma\in \Gamma} \in A[\zeta_k]$
one has
$$
\sum_{\chi\in\hat{\Gamma}_k} \chi\ast(\chi(\delta)^{-1}\alpha) = 
\#\Gamma\cdot \alpha_\delta
\in B_\delta[\zeta_k] \subset A[\zeta_k].
$$
\end{lem}

\begin{proof}
The proof is an easy exercise. 
The last statement follows from the fact that if $\delta \in \Gamma$ then
$\sum_{\chi\in\hat{\Gamma}_k} \chi(\delta)$ is 
$\#\Gamma$ if $\delta = 1$, and otherwise is $0$.
\end{proof}

\section{Euclidean vector spaces, lattices, and orders}

In a series of examples, we introduce the lattice structure on
reduced orders that we will use in the proofs of our main results.
We conclude the section with a result on gradings of reduced orders
that we will use to prove Proposition \ref{Aorthog} and Theorem \ref{univgradingorderthm}.

If $C$ is a $\Z$-module or $\Z$-algebra, we will write $C_\Q$ for $C\otimes_\Z\Q$.

A {\bf Euclidean vector space} is a finite dimensional $\R$-vector space
$E$ equipped with a map 
$\langle \, \, \, , \, \,  \,  \rangle : E \times E \to \R$,
$(x,y)\mapsto \langle x,y \rangle$ that is
$\R$-bilinear, symmetric, and positive definite.

\begin{ex}
\label{Euclideaneex}
Suppose $E$ is a finite dimensional $\R$-vector space
equipped with a map 
$\langle \, \, \, , \, \,  \,  \rangle : E \times E \to \R$ that is
$\R$-bilinear, symmetric, and positive semidefinite.
Let
$$
\rad(E) = \{ x\in E : \langle x,E \rangle = 0\}.
$$
Then 
$\rad(E)  = \{ x\in E : \langle x,x \rangle = 0\}$,
and $\langle \,  \, , \,   \,  \rangle$ makes
$E/\rad(E)$ into a Euclidean vector space.
\end{ex}

\begin{ex}
\label{Elatticeex}
Suppose $E$ is a commutative $\R$-algebra with $\dim_\R(E) < \infty$.
For all $x,y\in E$, let 
$
\langle x,y \rangle = \sum_{\sigma : E \to\C} \sigma(x)\overline{\sigma(y)},
$
where $\sigma$ ranges over all $\R$-algebra homomorphisms from $E$ to $\C$.
Then $\rad(E) = \sqrt{0_E}$.
(If $x\in\sqrt{0_E}$ then $\sigma(x)=0$ for all $\sigma$, so 
$\langle x,y\rangle = 0$ for all $y$, so $x\in\rad(E)$.
Conversely, $E/\sqrt{0_E}$ is a product of fields, and these fields
are 
$\R$ and $\C$. Since the inner products on $\R$ and $\C$ are positive
definite, so is the inner product on $E$. Thus $\rad(E/\sqrt{0_E}) =0$,
so $\rad(E) \subset \sqrt{0_E}$. Note that, as a consequence, the number of $\sigma$'s equals $\dim_\R(E)$.)
\end{ex}

Recall that a  
{\bf lattice} is a finitely generated
free abelian group $L$ equipped with a positive definite symmetric
$\R$-bilinear function $\langle \,\, ,  \,\, \rangle : L_\R \times L_\R \to \R$,
where $L_\R = L\otimes_\Z\R$.

If $B$ and $C$ are rings, we write $\Rhom(B,C)$ for the
set of ring homomorphisms from $B$ to $C$.

\begin{ex}
\label{orderlatticeex}
Suppose $A$ is an order. Then $E = A_\R$
is a finite dimensional $\R$-vector space
equipped with an $\R$-bilinear, symmetric, positive semidefinite inner product 
$\langle \,\, ,  \,\, \rangle : E \times E \to \R$
as in Example \ref{Elatticeex}.
Further, 
$\rad(E) = \sqrt{0_E} = (\sqrt{0_A})_\R$, and thus
$A/\sqrt{0_A}$ has a natural lattice structure.
(That $(\sqrt{0_A})_\R \subset \sqrt{0_E}$ is clear.
For the reverse inclusion, $A/\sqrt{0_A}$ is a reduced order,
so $(A/\sqrt{0_A})_\Q$ is a product of finitely many number fields,
so is a product of finitely many separable extensions of $\Q$.
It follows that $(A/\sqrt{0_A})_\R = E/(\sqrt{0_A})_\R$ is a product of 
finitely many separable extensions of $\R$, so is reduced.
It also follows that $\#\Rhom(A,\C)$ equals 
$\rank(A/\sqrt{0_A})$, the rank of $A/\sqrt{0_A}$ as an abelian group.)
\end{ex}

\begin{ex}
\label{orderlatticeex4}
Suppose $A$ is a reduced order.
Then $A/\sqrt{0_A}=A$, so by the previous example $A$ has a natural lattice structure. It is given by
$$
\langle x,y \rangle  
= \sum_{\sigma\in\Rhom(A,\C)} \sigma(x)\overline{\sigma(y)}
$$
for $x$, $y\in A$.
Note that $\#\Rhom(A,\C)=\rank(A)$. It follows that one has
\begin{equation}
\label{zetarkeqn}
\langle\zeta,\zeta\rangle=\rank(A)\qquad\hbox{for every\ }\zeta\in\mu(A).
\end{equation}
\end{ex}

\begin{ex}
\label{orderlatticeex5}
Let $\Gamma$ be a finite abelian group, and let $A=\Z[\Gamma]$. 
A short computation shows that for 
$x=\sum_{\gamma\in\Gamma}x_\gamma\gamma$ (with $x_\gamma\in\Z$) one has
$$\langle x,x\rangle=\#\Gamma\cdot\sum_{\gamma\in\Gamma}x_\gamma^2.$$
Hence for $x\ne 0$ one has $\langle x,x\rangle\ge\#\Gamma$, with equality if and only if $x\in\pm\Gamma$. Combining this with  \eqref{zetarkeqn}, one obtains Higman's theorem $\mu(\Z[\Gamma])=\pm\Gamma$.
\end{ex}

\begin{ex}
\label{orderlatticeex6}
Let $\Gamma$ be a finite abelian group, let $I$ be the $\Z[\Gamma]$-ideal 
$\Z\cdot\sum_{\gamma\in\Gamma}\gamma$, and put $A=\Z[\Gamma]/I$. For $x=(\sum_{\gamma\in\Gamma}x_\gamma\gamma)+I\in A$ (with $x_\gamma\in\Z$), one computes
$$\langle x,x\rangle=\sum_{{\gamma,\delta\in\Gamma}\atop{\gamma<\delta}}(x_\gamma-x_\delta)^2,$$
where $<$ is any total ordering on $\Gamma$. One readily deduces that for $x\ne0$ this is at least $\#\Gamma-1=\rank(A)$, with equality if and only if $x\in\pm\Gamma+I$. As before, one deduces $\mu(\Z[\Gamma]/I)=\pm\Gamma+I$.
\end{ex}

\begin{ex}
\label{orderlatticeex7}
Contrary to what the previous two examples might suggest, 
it is not the case that 
$\langle x,x\rangle\ge\rank(A)$
for every non-zero $x$ in a reduced order $A$, 
not even when $A$ is connected. 
For example, let $A$ be the subring of the product ring 
$\Z\times\Z\times\Z\times\Z\times\Z$ 
consisting of those 
elements whose coordinates have the same parity, and 
choose $x=(2,0,0,0,0)$. 
Then $\rank(A)=5$ and $\langle x,x\rangle = 4$.
We will refer to this example in Remark \ref{exampremrtunity},
concerning the proof of Theorem \ref{firstthm}(iii).
\end{ex}

\begin{lem}
\label{redorderlem}
Suppose $\Gamma$ is an abelian group, 
$A$ is either
a commutative $\Q$-algebra with $\dim_\Q A < \infty$ or an order,  
$(B_\gamma)_{\gamma\in \Gamma}$ is a $\Gamma$-grading of $A$,
and $A$ has no non-zero homogeneous nilpotent elements.
Then:
\begin{enumerate}
\item
if $\delta\in\Gamma$ and $\delta$ has infinite order, then $B_\delta = 0$;
\item
the subgroup $\langle \gamma\in \Gamma : B_\gamma \neq 0 \rangle$
is finite.
\end{enumerate}
\end{lem}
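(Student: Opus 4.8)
The plan is to establish (i) directly and then deduce (ii) as a formal consequence. For (i), I would fix $\delta\in\Gamma$ of infinite order, take an arbitrary $x\in B_\delta$, and aim to show that $x$ is nilpotent; since $x$ is homogeneous, the hypothesis on $A$ will then force $x=0$, giving $B_\delta=0$. The crucial point is that iterating the grading axiom $B_\gamma\cdot B_{\gamma'}\subset B_{\gamma\gamma'}$ yields $x^n\in B_{\delta^n}$ for every $n\in\Z_{>0}$. As $\delta$ has infinite order, the elements $\delta,\delta^2,\delta^3,\dots$ are pairwise distinct, so by Lemma \ref{orderlem} (which guarantees $B_\gamma=0$ for all but finitely many $\gamma$) all but finitely many of the groups $B_{\delta^n}$ are zero. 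In particular $B_{\delta^n}=0$ for some $n$, whence $x^n=0$, and the argument closes.

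For (ii), I would invoke Lemma \ref{orderlem} once more to see that the support $S=\{\gamma\in\Gamma:B_\gamma\neq0\}$ is finite, and then apply part (i) to conclude that every element of $S$ has finite order. Hence $\langle S\rangle$ is generated by finitely many torsion elements of an abelian group, so it is a quotient of the finite group $\prod_{\gamma\in S}\Z/(\ord\gamma)\Z$ and is therefore finite.

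I do not anticipate a serious obstacle: the proof is simply the interaction of two facts, namely that the grading support is finite (by finite rank or finite $\Q$-dimension of $A$) and that degrees multiply when elements are multiplied. The only place that rewards a moment's attention is verifying that $x^n$ lands in the single piece $B_{\delta^n}$ rather than being smeared across several pieces, but this is immediate from repeated use of axiom (i) of Definition \ref{gradingdefn}.
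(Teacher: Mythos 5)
Your proposal is correct and follows essentially the same route as the paper's own proof: both parts rest on Lemma \ref{orderlem} giving finite support, the observation that $x^N\in(B_\delta)^N\subset B_{\delta^N}=0$ for some $N$ when $\delta$ has infinite order, and the fact that an abelian group generated by finitely many torsion elements is finite. The only cosmetic difference is that you spell out the quotient of $\prod_{\gamma\in S}\Z/(\ord\gamma)\Z$ explicitly, which the paper leaves implicit.
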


\begin{proof}
By Lemma \ref{orderlem},
for all but finitely many $\gamma\in \Gamma$ we have 
$B_\gamma = 0$.
Suppose $\delta\in\Gamma$ has infinite order.
Then there exists $N\in\Z_{>0}$ such that 
$B_{\delta^N} = 0$.
Suppose $x\in B_\delta$.
Then $x^N \in (B_{\delta})^N \subset B_{\delta^N} = 0$,
so $x$ is homogeneous and nilpotent. By our assumption, 
$x=0$, proving (i).
Thus the abelian group
$\langle \gamma\in \Gamma : B_\gamma \neq 0 \rangle$ is generated by 
finitely many elements of finite order,  
so this group is finite, proving (ii).
\end{proof}

\begin{prop}
\label{redordercor}
Suppose $\Gamma$ is an abelian group, $A$ is a reduced order, and 
$\B = (B_\gamma)_{\gamma\in \Gamma}$ is a $\Gamma$-grading of $A$.
Then:
\begin{enumerate}
\item
the subgroup $\langle \gamma\in \Gamma : B_\gamma \neq 0 \rangle$
is finite;
\item
if $(\Gamma,\B)$ is universal, then $\Gamma$ is finite.
\end{enumerate}
\end{prop}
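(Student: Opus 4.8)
The plan is to prove part (i) by verifying the hypothesis of Lemma \ref{redorderlem}, and then to deduce part (ii) directly by combining (i) with Lemma \ref{gammamisc}. For part (i), I would first observe that a reduced order $A$ has nilradical $\sqrt{0_A}=0$, so the only nilpotent element of $A$ is $0$. In particular $A$ has no non-zero homogeneous nilpotent elements, since a homogeneous nilpotent is in particular nilpotent. Thus Lemma \ref{redorderlem} applies with this $A$ and this $\Gamma$-grading $\B$, and Lemma \ref{redorderlem}(ii) gives immediately that the subgroup $\langle \gamma\in\Gamma : B_\gamma\neq 0\rangle$ is finite, which is exactly the assertion of (i).

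For part (ii), suppose in addition that $(\Gamma,\B)$ is universal. Then Lemma \ref{gammamisc} tells us that $\Gamma = \langle \gamma\in\Gamma : B_\gamma\neq 0\rangle$. But by part (i) the group on the right-hand side is finite, so $\Gamma$ is finite, as desired.

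I do not anticipate any genuine obstacle here: the proposition is essentially a clean specialization of the two key earlier lemmas to the reduced-order setting. The only point that requires a moment's care is confirming that a reduced order really does satisfy the hypothesis ``no non-zero homogeneous nilpotent elements'' of Lemma \ref{redorderlem}; this is where the reducedness of $A$ enters, via the fact that $\sqrt{0_A}=0$ forces every nilpotent (homogeneous or not) to vanish. Once that observation is in place, both parts follow formally. It is worth noting that the reduced hypothesis is doing real work: for a general order one only gets that the homogeneous nilpotents vanish as an extra assumption, whereas here reducedness supplies it automatically, and this is precisely the feature that ultimately makes the universal grading in Theorem \ref{univgradingorderthm} be by a \emph{finite} group.
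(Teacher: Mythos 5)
Your proposal is correct and follows exactly the paper's own argument: reducedness of $A$ gives that there are no non-zero (homogeneous) nilpotents, so Lemma \ref{redorderlem}(ii) yields (i), and then Lemma \ref{gammamisc} combined with (i) yields (ii). No gaps; this is the same proof.
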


\begin{proof}
Since $A$ is reduced, it has no non-zero nilpotent elements, so
(i) follows from Lemma \ref{redorderlem}(ii).
Part (ii) now follows from (i) and Lemma \ref{gammamisc}.
\end{proof}

\section{Nilpotent and separable elements}

We next prove Theorem \ref{firstthm}(i).
If $R$ is a ring and $m\in\Z_{>0}$,
we write $R^+[m]$ for the $m$-torsion in the
additive group $R$.

\begin{prop}
\label{AGamma}
Suppose $A$ is a  
ring, $\Gamma$ is an abelian group, and 
$(B_\gamma)_{\gamma\in \Gamma}$ is a $\Gamma$-grading of $A$. 
\begin{enumerate}
\item
If $\Gamma$ is finite
and $\alpha = (\alpha_\gamma)_{\gamma\in \Gamma} \in \sqrt{0_A}$,
then $\# \Gamma\cdot\alpha_\delta \in \sqrt{0_A}$ for all $\delta\in \Gamma$.
\item
If $\Gamma$ is finite
and  $A^+[\# \Gamma] =0$,
then $\sqrt{0_A}$ is a homogeneous ideal.
\item
If $A$ is an order,
then $\sqrt{0_A}$ is a homogeneous ideal.
\end{enumerate}
\end{prop}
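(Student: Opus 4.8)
The plan is to prove the three parts in order, deriving (ii) from (i) and (iii) from (ii); the conceptual core is part (i), where I would exploit the $\hat{\Gamma}_k$-action supplied by Lemma \ref{fchithm}. For (i), fix a positive integer $k$ divisible by the exponent of $\Gamma$ and pass to the ring $A[\zeta_k] = A\otimes_\Z\Z[\zeta_k]$, which contains $A$ as a subring. Since $\alpha\in\sqrt{0_A}$ is nilpotent in $A$, it remains nilpotent in $A[\zeta_k]$. For each $\chi\in\hat{\Gamma}_k$ the element $\chi(\delta)^{-1}\alpha$ is again nilpotent, being a scalar multiple of $\alpha$ by an element of $\Z[\zeta_k]$, and since $\chi\ast$ is a ring automorphism of $A[\zeta_k]$ the element $\chi\ast(\chi(\delta)^{-1}\alpha)$ is nilpotent as well. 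All rings here are commutative, so $\sqrt{0_{A[\zeta_k]}}$ is an ideal and the sum over $\chi$ of these elements is nilpotent; by Lemma \ref{fchithm} that sum equals $\#\Gamma\cdot\alpha_\delta$. Hence $\#\Gamma\cdot\alpha_\delta$ is nilpotent in $A[\zeta_k]$, and since it lies in $A$ and the inclusion $A\hookrightarrow A[\zeta_k]$ is injective, it is nilpotent in $A$, i.e.\ $\#\Gamma\cdot\alpha_\delta\in\sqrt{0_A}$.

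For (ii) the inclusion $\bigoplus_\gamma(\sqrt{0_A}\cap B_\gamma)\subset\sqrt{0_A}$ is immediate because $\sqrt{0_A}$ is an ideal, so only the reverse direction needs the hypothesis. Taking $\alpha=(\alpha_\gamma)\in\sqrt{0_A}$, part (i) gives $\#\Gamma\cdot\alpha_\delta\in\sqrt{0_A}$, so $(\#\Gamma)^n\alpha_\delta^n=0$ for some $n$. The condition $A^+[\#\Gamma]=0$ makes multiplication by $\#\Gamma$, and hence by $(\#\Gamma)^n$, injective on $A^+$, so $\alpha_\delta^n=0$ and each $\alpha_\delta\in\sqrt{0_A}$; thus $\sqrt{0_A}$ is homogeneous.

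For (iii) the group $\Gamma$ may be infinite, so I would first reduce to the finite case. Because $A$ is an order, Lemma \ref{orderlem} shows that $S=\{\gamma:B_\gamma\neq0\}$ is finite, and Lemma \ref{gammamischomog} then yields a finite abelian group $\Delta$ and a $\Delta$-grading $\CC=(C_\delta)_{\delta\in\Delta}$ with the same homogeneous elements. In fact the construction there reindexes the nonzero pieces injectively, so the families $\{B_\gamma:B_\gamma\neq0\}$ and $\{C_\delta:C_\delta\neq0\}$ coincide as collections of subgroups, whence an additive subgroup of $A$ is homogeneous for $\B$ if and only if it is homogeneous for $\CC$. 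Since $A^+\cong\Z^n$ is torsion-free we have $A^+[\#\Delta]=0$, so part (ii) applies to $\CC$ and shows $\sqrt{0_A}$ is homogeneous for $\CC$, hence for $\B$.

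The main obstacle is part (i): everything else is bookkeeping once Lemma \ref{fchithm} is available. The delicate points there are that the averaging identity only places the sum in $\sqrt{0_{A[\zeta_k]}}$, so one must descend nilpotency along $A\hookrightarrow A[\zeta_k]$, and that the factor $\#\Gamma$ is genuinely present. This unremovable factor is exactly why (ii) requires the torsion-freeness hypothesis and why (iii) must invoke that an order has torsion-free additive group. A secondary point demanding care in (iii) is checking that replacing $\B$ by the finite quotient grading of Lemma \ref{gammamischomog} does not alter which subgroups count as homogeneous.
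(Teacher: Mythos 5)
Your proposal is correct. For parts (i) and (ii) it follows the paper's own proof essentially verbatim: the same base change to $A[\zeta_k]$, the same use of the $\hat{\Gamma}_k$-action and averaging identity from Lemma \ref{fchithm}, descent of nilpotency along the inclusion $A\hookrightarrow A[\zeta_k]$, and then cancellation of the factor $(\#\Gamma)^N$ using $A^+[\#\Gamma]=0$.

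Part (iii) is where you take a genuinely different route. The paper does not re-index the given grading; instead it forms $\I=\bigoplus_{\gamma}(\sqrt{0_A}\cap B_\gamma)$, the largest homogeneous ideal of $A$ contained in $\sqrt{0_A}$, passes to the order $A/\I$ with its induced grading $C_\gamma=B_\gamma/(\sqrt{0_A}\cap B_\gamma)$, which has no non-zero homogeneous nilpotents, and invokes Lemma \ref{redorderlem}(ii) to conclude that the subgroup generated by the support of that grading is finite; applying (ii) to $A/\I$ then shows its nilradical is homogeneous, and homogeneity is transferred back to $\sqrt{0_A}$ in $A$. You stay inside $A$ itself: Lemma \ref{orderlem} makes the support $S$ finite, and the construction in Lemma \ref{gammamischomog} (pushforward along $\Gamma\to\Gamma/\Gamma^M$, injective on $S$) re-indexes the very same non-zero pieces by a finite group, so that homogeneity with respect to $\B$ and with respect to $\CC$ is literally the same condition, after which (ii) applies because orders are torsion-free. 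Both arguments are sound; yours is in fact the reduction the paper itself uses for the $\Q$-algebra analogue, Proposition \ref{Esephomog}, and it is slightly more economical, since it avoids the quotient construction and the step of pulling homogeneity back from $A/\I$. What the paper's detour buys is only that Lemma \ref{redorderlem} handles pieces indexed by infinite-order elements without any re-indexing. One point you rightly flag but should keep explicit: the identification of the non-zero pieces of $\B$ and $\CC$ rests on the proof of Lemma \ref{gammamischomog} (the map $c$ restricted to $S$ is injective, so each non-zero $C_\delta$ equals some $B_s$), not merely on its statement about the union of homogeneous elements; since that construction is exactly what the lemma's proof provides (and equality of the unions can in any case be shown to force equality of the non-zero pieces), there is no gap.
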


\begin{proof}
We first prove (i).
Let $k$ denote the exponent of the finite group $\Gamma$ and let
$A' = A[\zeta_k]$.
We have $\alpha \in \sqrt{0_A} \subset \sqrt{0_{A'}}$, and
since $\sqrt{0_{A'}}$ is an ideal we have
$\chi(\delta)^{-1}\alpha \in \sqrt{0_{A'}}$
for all $\chi\in\hat{\Gamma}_k$ and $\delta\in\Gamma$.
Since $\hat{\Gamma}_k$ acts by ring automorphisms (Lemma \ref{fchithm}), we have
$\sum_{\chi\in\hat{\Gamma}_k} \chi\ast(\chi(\delta)^{-1}\alpha) \in \sqrt{0_{A'}}$
for all $\delta\in\Gamma$.
By Lemma~\ref{fchithm} we now have 
$\# \Gamma\cdot\alpha_\delta \in \sqrt{0_{A'}} \cap A =  \sqrt{0_A}$
for all $\delta\in\Gamma$.

We next prove (ii).
Clearly, $\bigoplus_{\gamma\in \Gamma} (\sqrt{0_A} \cap B_\gamma) \subset \sqrt{0_A}$.
For the reverse inclusion, suppose 
$\alpha = (\alpha_\gamma)_{\gamma\in \Gamma} \in \sqrt{0_A}$ and $\delta\in \Gamma$.
By (i) we have 
$(\# \Gamma\cdot \alpha_\delta)^N = 0$ for some $N\in\Z_{>0}$.
But $(\# \Gamma\cdot \alpha_\delta)^N = (\# \Gamma)^N\alpha_\delta^N$.
If $A^+[\# \Gamma] =0$, then $\alpha_\delta^N = 0$,
so $\alpha_\delta \in \sqrt{0_A}$ as desired.

For (iii), let $\I$ denote the ideal generated by the 
homogeneous nilpotent elements of $A$, i.e.,
$\I$ is the largest homogeneous ideal of $A$ contained in $\sqrt{0_A}$.
Then $A/\I$ has  a $\Gamma$-grading $(C_\gamma)_{\gamma\in \Gamma}$
with $C_\gamma = B_\gamma/(\sqrt{0_A}\cap B_\gamma)$,
and $A/\I$ is an order with no non-zero homogeneous nilpotent elements.
By Lemma \ref{redorderlem}(ii), the subgroup
$\langle \gamma\in \Gamma : C_\gamma \neq 0 \rangle$
is finite;
we can and do replace $\Gamma$ with this finite group.
Since  orders have no non-zero torsion, (iii) now follows from (ii). 
\end{proof}

The following example shows that the condition that $A^+[\# \Gamma] =0$
cannot be dropped from Proposition \ref{AGamma}(ii).

\begin{ex}
Suppose $p$ is a prime number 
and
$\Gamma$ is any finite abelian group of order divisible by $p$.
Then $A = \F_p[\Gamma] = \bigoplus_{\gamma\in \Gamma} \F_p\cdot\gamma$
is a $\Gamma$-graded ring and 
$(\sum_{\gamma\in \Gamma} \gamma)^2 = \# \Gamma\sum_{\gamma\in \Gamma}\gamma = 0$.
So $\sum_{\gamma\in \Gamma}\gamma\in \sqrt{0_A}$,  
but the coordinates
$\gamma$ of $\sum_{\gamma\in \Gamma}\gamma$ 
are units and thus are not nilpotent, so the ideal $\sqrt{0_A}$
is not homogeneous.
\end{ex}

We call a polynomial $f\in\Q[X]$ {\bf separable}
if $f$ is coprime to its derivative $f'$.
If $E$ is a commutative $\Q$-algebra
with $\dim_\Q E < \infty$, then $\alpha\in E$ is called 
{\bf separable} if there exists a separable polynomial
$f\in\Q[X]$ with $f(\alpha)=0$.
We write $E_\sep$ for the set of separable elements of $E$.
Note that $E_\sep$ is a sub-$\Q$-algebra of $E$ (see for example
Lemma 2.2 of \cite{Qalgs}).
We will use the next result to prove Theorem \ref{firstthm}(iii).

\begin{prop}
\label{Esephomog}
If $\Gamma$ is an abelian group
and $E = \bigoplus_{\gamma\in \Gamma} E_\gamma$ is a $\Gamma$-graded 
commutative $\Q$-algebra
with $\dim_\Q E < \infty$,
then both $E_\sep$ and $\sqrt{0_E}$ are homogeneous.
\end{prop}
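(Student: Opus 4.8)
The plan is to reduce first to the case that $\Gamma$ is finite, and then to treat $\sqrt{0_E}$ and $E_\sep$ separately: the nilradical is handled by a result already in hand, while $E_\sep$ requires the character action of Lemma~\ref{fchithm} after a cyclotomic base change.

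\emph{Reduction and the nilradical.} By Lemma~\ref{orderlem} the set $S = \{\gamma : E_\gamma \neq 0\}$ is finite, so Lemma~\ref{gammamischomog} supplies a finite group $\Delta$ and a $\Delta$-grading $\CC = c_\ast\B$ of $E$. Its proof shows that $c$ restricted to $S$ is injective, so each nonzero piece $C_\delta$ coincides with the unique $E_\gamma$ ($\gamma\in S$) satisfying $c(\gamma)=\delta$; hence $\B$ and $\CC$ have exactly the same family of nonzero pieces, and an additive subgroup $H\subset E$ is homogeneous for $\B$ if and only if it is homogeneous for $\CC$. Thus I may and do assume $\Gamma$ is finite. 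For the nilradical this already suffices: $E$ is a $\Q$-algebra, so $E^+$ is torsion-free and $E^+[\#\Gamma]=0$, whence $\sqrt{0_E}$ is homogeneous by Proposition~\ref{AGamma}(ii).

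\emph{Setup for $E_\sep$.} Put $k = \exp(\Gamma)$ and pass to $E' = E[\zeta_k] = E\otimes_\Q \Q(\zeta_k)$, a finite-dimensional commutative $\Q(\zeta_k)$-algebra. By Lemma~\ref{fchithm} the group $\hat{\Gamma}_k$ acts on $E'$ by ring automorphisms, and the identity there yields, for every $x\in E'$ and $\delta\in\Gamma$,
$$
x_\delta = \frac{1}{\#\Gamma}\sum_{\chi\in\hat{\Gamma}_k}\chi(\delta)^{-1}\,(\chi\ast x),
$$
where $x_\delta$ denotes the degree-$\delta$ component. Since $1\in E_1$, the scalar subfield $\Q(\zeta_k)$ lies in the degree-$1$ piece $E_1[\zeta_k]$, on which every $\chi$ acts trivially; hence each $\chi$ is a $\Q(\zeta_k)$-algebra automorphism of $E'$, and the displayed combination is $\Q(\zeta_k)$-linear.

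\emph{Main step.} I use two facts about separability. First, the set $(E')_\sep$ of elements of $E'$ that are separable over $\Q(\zeta_k)$ is a sub-$\Q(\zeta_k)$-algebra (Lemma~2.2 of \cite{Qalgs}, applied with base field $\Q(\zeta_k)$), and it is stable under $\hat{\Gamma}_k$, because a $\Q(\zeta_k)$-algebra automorphism carries the minimal polynomial of an element to that of its image. Second, for $x\in E$ the minimal polynomials of $x$ over $\Q$ and over $\Q(\zeta_k)$ coincide: the annihilator of $x$ in $\Q[X]$ base-changes, by flatness of $\Q(\zeta_k)$ over $\Q$, to its annihilator in $\Q(\zeta_k)[X]$. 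Consequently separability over $\Q$ and over $\Q(\zeta_k)$ agree on elements of $E$, giving both $E_\sep \subseteq (E')_\sep$ and $E\cap (E')_\sep = E_\sep$. The conclusion is now immediate: for $x\in E_\sep$ and $\delta\in\Gamma$, the displayed formula expresses $x_\delta$ as a $\Q(\zeta_k)$-linear combination of the separable elements $\chi\ast x$, so $x_\delta\in(E')_\sep$; but $x_\delta\in E_\delta\subseteq E$, so $x_\delta\in E\cap(E')_\sep = E_\sep$. Hence $E_\sep = \bigoplus_{\delta}(E_\sep\cap E_\delta)$ is homogeneous. The routine reductions and the nilradical case are painless, so the real content lies in this last paragraph; I expect the main point to get right to be the compatibility of separability with the base change $\Q\to\Q(\zeta_k)$, since it is exactly what permits descending the conclusion from $(E')_\sep$ back to $E_\sep$, together with the check that the character action is $\Q(\zeta_k)$-linear.
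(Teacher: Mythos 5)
Your proof is correct and takes essentially the same approach as the paper: reduce to finite $\Gamma$ via Lemmas \ref{orderlem} and \ref{gammamischomog}, dispose of $\sqrt{0_E}$ by Proposition \ref{AGamma}(ii), and handle $E_\sep$ by the character averaging of Lemma \ref{fchithm} after the cyclotomic base change $E' = E\otimes_\Q\Q(\zeta_k)$. The only difference is that the paper defines $(E')_\sep$ using separability over $\Q$ (legitimate since $E'$ is still a finite-dimensional $\Q$-algebra), so that stability under \emph{all} ring automorphisms and the equality $E\cap(E')_\sep = E_\sep$ are immediate, rendering your flat-base-change comparison of minimal polynomials over $\Q$ and over $\Q(\zeta_k)$ correct but unnecessary.
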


\begin{proof}
By Lemma~\ref{orderlem} 
the set $\{ \gamma\in \Gamma : E_\gamma \neq 0 \}$ is finite,
and by Lemma \ref{gammamischomog}  we may assume $\Gamma$ is finite.
For $\sqrt{0_E}$, see Proposition \ref{AGamma}(ii).
For $E_\sep$, the proof is the same.
Namely, suppose  
$\alpha = (\alpha_\gamma)_{\gamma\in \Gamma} \in E_\sep$ 
and let $E' = E\otimes_\Z\Z[\zeta_k]$ with $k$ the
exponent of $\Gamma$.
Then 
$\chi(\delta)^{-1} \in \langle \zeta_k \rangle \subset (E')_\sep$,
and $(E')_\sep$ is a ring that is stable under the ring automorphisms of $E'$.
As in the proof of Proposition~\ref{AGamma},
we obtain $\#\Gamma\cdot\alpha_\delta \in (E')_\sep \cap E = E_\sep$ 
for all $\delta\in\Gamma$.  
Since $(\#\Gamma)^{-1}\in\Q\subset E_\sep$,
we have $\alpha_\delta \in E_\sep$ for all $\delta\in\Gamma$,
as desired.
\end{proof}

\section{Idempotents in graded orders}

In this section we prove Theorem \ref{firstthm}(ii) (see Proposition \ref{idemsequal}).
We will use Proposition \ref{Aorthog} to prove both Theorem \ref{univgradingorderthm} and
Theorem \ref{firstthm}.

Suppose $L$ is a lattice.  If $z\in L$,
then a {\bf decomposition} of $z$ in $L$ is a pair
$(x,y)\in L \times L$ such that
$
z = x + y$ 
and $\langle x,y \rangle \ge 0.
$
We say that such a decomposition is {\bf non-trivial} if $x\neq 0$ and $y \neq 0$.
Call $z$ {\bf indecomposable} (in $L$) if the number of decompositions
of $z$ equals $2$, or equivalently, if 
$z\neq 0$ and $z$ has no non-trivial decompositions.

\begin{rem}
\label{zxyineq}
If $L$ is a lattice and $z = x + y$ with $x,y,z\in L$, then:
\begin{enumerate}
\item
$
\langle x,y \rangle \ge 0 \iff 
\langle z,z \rangle \ge \langle x,x \rangle + \langle y,y \rangle,
$
\item
$
\langle x,y \rangle = 0 \iff 
\langle z,z \rangle = \langle x,x \rangle + \langle y,y \rangle.
$
\end{enumerate}
\end{rem}

\begin{rems}
\begin{enumerate}[leftmargin=*]
\item 
If $z$  
is a shortest non-zero vector in a lattice $L$, then
$z$ is indecomposable.
\item 
If $L$ is a lattice, then $L$ is generated by its set of indecomposable elements.
\end{enumerate}
\end{rems}

Recall that $\Id(A)$ denotes the set of idempotents of a ring $A$.
Below we use the natural lattice structure on a reduced order that was given in 
Example \ref{orderlatticeex4}.

\begin{lem}
\label{xxgesigmalem}
If $A$ is a reduced order and  $x\in A$, 
then $\langle x,x \rangle \ge \#\{ \sigma\in\Rhom(A,\C) : \sigma(x) \neq 0 \}$.
\end{lem}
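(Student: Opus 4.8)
The plan is to use the explicit formula for the inner product on the reduced order $A$ from Example \ref{orderlatticeex4}, namely
$$
\langle x,x \rangle = \sum_{\sigma\in\Rhom(A,\C)} \sigma(x)\overline{\sigma(x)} = \sum_{\sigma\in\Rhom(A,\C)} |\sigma(x)|^2,
$$
and to bound this sum from below by counting only the nonzero terms. The key observation is that each ring homomorphism $\sigma : A \to \C$ sends the order $A$ into the ring of algebraic integers in $\sigma(A) \subset \C$, so whenever $\sigma(x) \neq 0$ the value $\sigma(x)$ is a \emph{nonzero algebraic integer}. A nonzero algebraic integer cannot have absolute value less than... well, not necessarily at least $1$ individually, but the \emph{product} of $|\sigma(x)|^2$ over a suitable conjugate-closed set of $\sigma$ is a positive integer, hence at least $1$. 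This is the mechanism I would exploit.

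First I would fix $x \in A$ and let $S = \{\sigma \in \Rhom(A,\C) : \sigma(x) \neq 0\}$, so that $\langle x,x\rangle = \sum_{\sigma \in S} |\sigma(x)|^2$ and the goal is to show this sum is at least $\#S$. By the AM--GM inequality it suffices to show the product $\prod_{\sigma \in S}|\sigma(x)|^2 \ge 1$, since then the arithmetic mean of the $\#S$ positive reals $|\sigma(x)|^2$ is at least their geometric mean, which is at least $1$, giving $\sum_{\sigma\in S}|\sigma(x)|^2 \ge \#S$. So the crux reduces to showing that the product of $|\sigma(x)|^2$ over the nonvanishing embeddings is at least $1$.

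Next I would show this product is a positive \emph{rational integer}, whence at least $1$. The set $\Rhom(A,\C)$ decomposes according to the factorization $A_\Q = \prod_j K_j$ into number fields (as in Example \ref{orderlatticeex}): the homomorphisms to $\C$ are exactly the field embeddings $K_j \hookrightarrow \C$, grouped into Galois-conjugate families, and the inner product is a sum over all of them of $|\sigma(x)|^2$. Writing $x = (x_j)_j$ with $x_j \in K_j$, the contribution of the $j$-th factor is $\sum_{\tau : K_j \hookrightarrow \C} |\tau(x_j)|^2$, and the set $S$ restricted to that factor is either empty (if $x_j = 0$) or all of the embeddings of $K_j$ (if $x_j \neq 0$, since then no conjugate $\tau(x_j)$ vanishes). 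For each factor with $x_j \neq 0$, the product $\prod_\tau |\tau(x_j)|^2 = |\N_{K_j/\Q}(x_j)|^2$ is the square of the absolute norm of a nonzero element of an order in $K_j$, hence a nonzero rational integer, hence at least $1$. Multiplying over the nonzero factors shows $\prod_{\sigma\in S}|\sigma(x)|^2 \ge 1$.

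The main obstacle, and the only point requiring care, is the bookkeeping of which embeddings vanish: I must confirm that within a single number-field factor $K_j$ the embeddings $\tau(x_j)$ either all vanish or all are nonzero, so that $S$ meets each factor in either no embeddings or all of them. This holds because $\tau(x_j) = 0$ for one embedding $\tau$ forces $x_j = 0$ (an embedding of a field is injective), hence $\tau'(x_j) = 0$ for every embedding $\tau'$. With this in hand the product over $S$ is literally a product of the norms over the nonvanishing factors, and the integrality of each norm (elements of $A$ being algebraic integers, so their norms lie in $\Z$) closes the argument. The combination of AM--GM with the integrality of the norm is the whole idea; everything else is routine.
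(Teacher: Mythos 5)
Your proof is correct and follows essentially the same route as the paper's: reduce via the arithmetic--geometric mean inequality to showing $\prod_{\sigma(x)\neq 0}\sigma(x)\overline{\sigma(x)} \ge 1$, then conclude by integrality of that product. The only difference is that the paper simply asserts $\prod_{\sigma(x)\neq 0}\sigma(x)\overline{\sigma(x)}\in\Z_{>0}$, whereas you supply the justification (decomposing $A_\Q$ into number fields and using that norms of nonzero algebraic integers are nonzero rational integers), which is a correct filling-in of the detail the paper leaves implicit.
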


\begin{proof}
If $\sigma(x)=0$ for
all $\sigma\in\Rhom(A,\C)$, then $x=0$ 
(see for example Lemma 3.1 of \cite{CMorders}),
and the desired result holds.
Assume that $x \neq 0$.
Applying the arithmetic-geometric mean inequality to obtain
the first inequality below, and using that
$\prod_{\sigma(x)\neq 0} \sigma(x)\overline{\sigma(x)} \in\Z_{>0}$
for the second, we have
\begin{multline*}
\langle x,x \rangle 
= \sum_{{\sigma\in\Rhom(A,\C)}\atop{\sigma(x)\neq 0}} \sigma(x)\overline{\sigma(x)}
= \#\{\sigma : \sigma(x)\neq 0 \} \cdot 
\frac{\sum_{\sigma(x)\neq 0} \sigma(x)\overline{\sigma(x)}}{\#\{\sigma : \sigma(x)\neq 0 \}} \\
\ge \#\{\sigma : \sigma(x)\neq 0 \} \cdot 
\left(\prod_{\sigma(x)\neq 0} \sigma(x)\overline{\sigma(x)}\right)^{1/\#\{\sigma : \sigma(x)\neq 0 \}}
\ge \#\{\sigma : \sigma(x)\neq 0 \}.
\end{multline*}
\end{proof}

\begin{lem}
\label{eoneminuse}
If $A$ is a reduced order and  $e\in\Id(A)$, 
then $\langle e,1-e \rangle =  0$.
\end{lem}

\begin{proof}
Since $e\in\Id(A)$, for all $\sigma\in\Rhom(A,\C)$ 
we have $\sigma(e)\in\{ 0,1\}$, 
so $\sigma(e)\overline{\sigma(1-e)}=0$.
Thus,
$
\langle e,1-e \rangle 
= \sum_{\sigma\in\Rhom(A,\C)} \sigma(e)\overline{\sigma(1-e)}
= 0.
$
\end{proof}

\begin{prop}
\label{redorder}
Suppose $A$ is a reduced order.
Then the map
$$
F : \Id(A) \to \{ \text{decompositions of $1$ in $A$} \} 
$$
defined by $e \mapsto (e,1-e)$ is a bijection, and its inverse
sends a  
decomposition $(x,y)$ of $1$ to $x$.
\end{prop}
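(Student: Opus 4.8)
The plan is to show the two maps are mutually inverse, which reduces to a single nontrivial claim: in any decomposition $(x,y)$ of $1$, the first coordinate $x$ is idempotent. First I would check that $F$ is well-defined: for $e\in\Id(A)$ the pair $(e,1-e)$ satisfies $e+(1-e)=1$ and, by Lemma \ref{eoneminuse}, $\langle e,1-e\rangle=0\ge 0$, so it really is a decomposition of $1$. Injectivity of $F$, and the identity $G\circ F=\id$ for the candidate inverse $G\colon(x,y)\mapsto x$, are immediate from reading off the first coordinate. Since any decomposition $(x,y)$ of $1$ has $y=1-x$ forced by $x+y=1$, all that remains is to prove $F\circ G=\id$, i.e.\ that $x\in\Id(A)$; granting this, $F(x)=(x,1-x)=(x,y)$, so both composites are the identity and $F$ is a bijection with inverse $G$.

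The heart of the matter is therefore to show $x$ is idempotent, equivalently $xy=0$. I would start from the defining inequality $\langle x,y\rangle\ge 0$ and feed it into Remark \ref{zxyineq}(i) with $z=1$, obtaining $\langle 1,1\rangle\ge\langle x,x\rangle+\langle y,y\rangle$. On the left, $\langle 1,1\rangle=\#\Rhom(A,\C)=\rank(A)$ by Example \ref{orderlatticeex4}. On the right, Lemma \ref{xxgesigmalem} gives $\langle x,x\rangle\ge\#\{\sigma\in\Rhom(A,\C):\sigma(x)\ne 0\}$ and likewise for $y$. Writing $\Sigma=\Rhom(A,\C)$, $S_x=\{\sigma\in\Sigma:\sigma(x)\ne 0\}$, and $S_y=\{\sigma\in\Sigma:\sigma(y)\ne 0\}$, this chain of inequalities yields $\#\Sigma\ge \#S_x+\#S_y$. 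Because $\sigma(x)+\sigma(y)=\sigma(1)=1$ for every $\sigma$, the values $\sigma(x)$ and $\sigma(y)$ cannot both vanish, so $S_x\cup S_y=\Sigma$; inclusion–exclusion then gives $\#S_x+\#S_y=\#\Sigma+\#(S_x\cap S_y)$, and comparing with $\#\Sigma\ge\#S_x+\#S_y$ forces $S_x\cap S_y=\emptyset$.

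Thus for every $\sigma\in\Rhom(A,\C)$ at least one of $\sigma(x),\sigma(y)$ vanishes, so $\sigma(xy)=\sigma(x)\sigma(y)=0$ for all $\sigma$; since $A$ is reduced, an element on which all $\sigma$ vanish is $0$ (as used in the proof of Lemma \ref{xxgesigmalem}), whence $xy=0$. Therefore $x(1-x)=xy=0$, i.e.\ $x^2=x$, so $x\in\Id(A)$, which finishes the argument. The step I expect to be the genuine obstacle is precisely the passage from the real-analytic inequality $\langle x,y\rangle\ge 0$ to the rigid conclusion $\sigma(x)\in\{0,1\}$: a term-by-term analysis is hopeless, since $\mathrm{Re}(\sigma(x))-|\sigma(x)|^2\ge 0$ holds on an entire disc rather than only at $0$ and $1$. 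What rescues the argument is that Lemma \ref{xxgesigmalem} silently encodes the integrality of $A$ (via the arithmetic–geometric mean inequality applied to the positive-integer norms $\prod_{\sigma(x)\ne 0}|\sigma(x)|^2$), converting a length bound into an \emph{exact} count of nonvanishing embeddings; the disjointness of $S_x$ and $S_y$ then does the rest.
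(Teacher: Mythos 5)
Your proof is correct and follows essentially the same route as the paper's: well-definedness via Lemma \ref{eoneminuse}, injectivity by reading off the first coordinate, and surjectivity by combining Lemma \ref{xxgesigmalem} with $\langle 1,1\rangle=\#\Rhom(A,\C)$ and inclusion--exclusion on the sets of nonvanishing embeddings to force $\sigma(xy)=0$ for all $\sigma$, hence $xy=0$ by reducedness and $x\in\Id(A)$. The paper's displayed chain of inequalities is exactly your $\#\Sigma\ge\#S_x+\#S_y=\#\Sigma+\#(S_x\cap S_y)$ argument, so there is nothing to add.
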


\begin{proof}
We first show that the map $F$ is well-defined.
Suppose $e\in\Id(A)$. 
By Lemma \ref{eoneminuse} we have $\langle e,1-e \rangle =  0$.
Thus 
$(e,1-e)$ is a 
decomposition of $1$ in $A$, as desired.

The map $F$ is clearly injective.
To see that it is surjective,
suppose  $(x,y)$ is a decomposition of $1$ in $A$.
By Lemma \ref{xxgesigmalem} we have
$\langle x,x \rangle \ge \#\{ \sigma\in\Rhom(A,\C) : \sigma(x) \neq 0 \}$,
and the same with $y$ in place of $x$. 
Using that $x+y=1$ to obtain the third equality, it follows that 
\begin{multline*}
\#\Rhom(A,\C) 
= \rank_\Z A 
= \langle 1,1 \rangle 
\ge \langle x,x \rangle + \langle y,y \rangle \\
\ge \#\{\sigma\in\Rhom(A,\C) : \sigma(x)\neq 0 \} + \#\{\sigma\in\Rhom(A,\C) : \sigma(y)\neq 0 \} \\
= \#\Rhom(A,\C)  + \#\{\sigma\in\Rhom(A,\C) : \sigma(x)\neq 0, \,\, \sigma(y)\neq 0 \} \\ 
= \#\Rhom(A,\C)  + \#\{\sigma\in\Rhom(A,\C) : \sigma(xy)\neq 0 \}. 
\end{multline*}
Thus for all $\sigma\in\Rhom(A,\C)$ we have $\sigma(xy)= 0$.
So $x(1-x) = xy = 0$.
Thus, $x\in \Id(A)$ so $F$ is surjective.
\end{proof}

\begin{cor}
\label{Aconnindecomp}
Suppose $A$ is a reduced order.
Then $A$ is connected if and only if $1$ is indecomposable.
\end{cor}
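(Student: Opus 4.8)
The plan is to prove Corollary \ref{Aconnindecomp} by relating the two notions of ``splitting'' --- connectedness (via idempotents) and indecomposability of $1$ (via the lattice structure) --- through the bijection $F$ established in Proposition \ref{redorder}. The key observation is that Proposition \ref{redorder} already provides a bijection $F : \Id(A) \to \{\text{decompositions of $1$ in $A$}\}$, so the two sets have the same cardinality, and I expect the corollary to follow almost immediately by tracking what ``trivial'' means on each side.

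First I would recall the definitions. The ring $A$ is connected precisely when $\#\Idem(A) = 2$, i.e. when the only idempotents are $0$ and $1$. On the lattice side, $1$ is indecomposable precisely when the number of decompositions of $1$ equals $2$ (equivalently, $1 \neq 0$ and $1$ has no non-trivial decomposition). Since $A$ is a reduced order, it is automatic that $A \neq 0$ and hence $1 \neq 0$, so the requirement $1 \neq 0$ in the definition of indecomposable is satisfied.

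Next I would invoke the bijection $F$ from Proposition \ref{redorder}, which gives
$$
\#\Idem(A) = \#\{\text{decompositions of $1$ in $A$}\}.
$$
From this equality the corollary is immediate: $A$ is connected iff $\#\Idem(A) = 2$ iff the number of decompositions of $1$ equals $2$ iff $1$ is indecomposable. The only point needing a word is that ``$A$ connected'' as defined requires $A \neq 0$ in addition to $\#\Idem(A)=2$; but since reduced orders are nonzero rings (an order has $A^+ \cong \Z^n$ and the multiplicative identity is nonzero), and the definition of indecomposable likewise builds in $1 \neq 0$, the two extra nonvanishing conditions match up and cause no trouble.

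I do not anticipate a genuine obstacle here, as the substantive work has already been done in establishing $F$. The only thing to be careful about is the bookkeeping of the edge conditions ($A \neq 0$ versus $1$ indecomposable requiring $z \neq 0$) so that the counting argument is airtight rather than merely translating ``two idempotents'' to ``two decompositions'' without noting that both definitions encode the same nonvanishing hypothesis. One could phrase the whole argument in a single sentence, but I would write it as the short cardinality comparison above to make the logical flow explicit.
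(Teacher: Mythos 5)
Your proof is correct and is essentially the paper's own argument: the corollary is stated there without proof precisely because it is an immediate consequence of the bijection $F$ of Proposition \ref{redorder}, via exactly the cardinality comparison you give ($\#\Idem(A)=2$ iff the number of decompositions of $1$ equals $2$). One small caveat: your parenthetical claim that reduced orders are nonzero rings is false under the paper's definition (which allows $A^+\cong\Z^0$, i.e., the zero ring, and the zero ring is reduced), but this does no harm, since your counting argument covers that case as well---there both connectedness and indecomposability of $1$ fail.
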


\begin{lem}
\label{Afacts}
Suppose 
$A$ is a reduced order, $\Gamma$ is a finite abelian group, and 
$(B_\gamma)_{\gamma\in \Gamma}$ is a $\Gamma$-grading of $A$.
Let $k$ denote the exponent of the group $\Gamma$ and  
let $A' = A\otimes_\Z \Z[\zeta_k]$.
Then:
\begin{enumerate}
\item
$A'$ is reduced;
\item
$
\Rhom(A',\C) \cong \Rhom(A,\C) \times \Rhom(\Z[\zeta_k],\C);
$
\item
for all $\alpha,\beta\in A \subset A'$ we have
$
\langle \alpha,\beta \rangle_{A'} =  \varphi(k)\langle \alpha,\beta \rangle_{A},
$
where $\langle \, \, \, ,\, \, \,  \rangle_{A'}$ 
and
 $\langle \, \, \, ,\, \, \,  \rangle_{A}$ are the inner products of
Example \ref{orderlatticeex4} for $A'$ and $A$, respectively.

\end{enumerate}
\end{lem}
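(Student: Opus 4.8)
The plan is to treat the three assertions in turn, with (iii) resting on (i) and (ii); the grading $(B_\gamma)_{\gamma\in\Gamma}$ plays no role beyond fixing the integer $k$, so I would ignore it. For (i), I would pass to $\Q$-coefficients. Since $A$ and $\Z[\zeta_k]$ are free $\Z$-modules of finite rank, so is $A' = A\otimes_\Z\Z[\zeta_k]$; hence $A'$ is torsion-free and the natural map $A'\to A'_\Q = A'\otimes_\Z\Q \cong A_\Q\otimes_\Q\Q(\zeta_k)$ is injective. As a subring of a reduced ring is reduced, it suffices to show $A_\Q\otimes_\Q\Q(\zeta_k)$ is reduced. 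Because $A$ is a reduced order, Example \ref{orderlatticeex} gives that $A_\Q$ is a product of number fields $\prod_i K_i$, so $A_\Q\otimes_\Q\Q(\zeta_k)\cong\prod_i K_i[X]/(\Phi_k)$. Since $\Phi_k$ divides the separable polynomial $X^k-1\in\Q[X]$, it is separable over each $K_i$, so each factor $K_i[X]/(\Phi_k)$ is a product of fields and in particular reduced. This yields (i), and shows moreover that $A'$ is a reduced order.

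For (ii) I would appeal to the universal property of $A\otimes_\Z\Z[\zeta_k]$ as the coproduct of $A$ and $\Z[\zeta_k]$ in the category of commutative rings, which is valid because $\Z$ is the initial object there. A ring homomorphism $A'\to\C$ is then the same datum as a pair $(\sigma,\rho)$ with $\sigma\in\Rhom(A,\C)$ and $\rho\in\Rhom(\Z[\zeta_k],\C)$, the correspondence sending $(\sigma,\rho)$ to the homomorphism $\tau$ determined by $\tau(a\otimes c)=\sigma(a)\rho(c)$. This is precisely the claimed bijection.

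For (iii), note first that by (i) the ring $A'$ is a reduced order that is commutative and free of finite rank over $\Z$, so the inner product of Example \ref{orderlatticeex4} is defined on it. I would expand $\langle\alpha,\beta\rangle_{A'}=\sum_{\tau\in\Rhom(A',\C)}\tau(\alpha)\overline{\tau(\beta)}$ and reindex the sum over pairs $(\sigma,\rho)$ via the bijection of (ii). For $\alpha\in A$ one has $\alpha=\alpha\otimes 1$, so $\tau(\alpha)=\sigma(\alpha)\rho(1)=\sigma(\alpha)$, and similarly for $\beta$; hence each summand depends only on $\sigma$. The double sum therefore factors as $\big(\sum_{\rho}1\big)\cdot\sum_{\sigma}\sigma(\alpha)\overline{\sigma(\beta)}$, and since $\Z[\zeta_k]$ is a reduced order of rank $\varphi(k)$ we have $\#\Rhom(\Z[\zeta_k],\C)=\varphi(k)$, giving $\langle\alpha,\beta\rangle_{A'}=\varphi(k)\langle\alpha,\beta\rangle_A$.

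The only step carrying any real content is the reducedness in (i), which hinges on the separability of cyclotomic extensions, equivalently on reducedness being preserved under tensoring separable $\Q$-algebras; parts (ii) and (iii) are formal, following from the coproduct description of the tensor product and a reindexing of the defining sum. I therefore expect no serious obstacle. The main points requiring care are the passage between $A'$ and $A'_\Q$ in (i) and the verification that $A'$ genuinely meets the hypotheses of Example \ref{orderlatticeex4}, so that the inner product $\langle\,\cdot\,,\cdot\,\rangle_{A'}$ used in (iii) is legitimately available.
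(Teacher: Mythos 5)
Your proposal is correct and follows essentially the same route as the paper: reducedness of $A'$ via the separability of $A'_\Q = A_\Q\otimes_\Q\Q(\zeta_k)$, the coproduct description of $\Rhom(A',\C)$ for (ii), and the counting of the $\varphi(k)$ extensions of each $\sigma\in\Rhom(A,\C)$ for (iii). You merely spell out the details that the paper's terse proof leaves implicit, including the useful check that $A'$ satisfies the hypotheses of Example \ref{orderlatticeex4}.
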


\begin{proof}
Part (i) holds since $A'_\Q = A_\Q\otimes_\Q \Q(\zeta_k)$
is a separable algebra over $\Q$ (since $A_\Q$ and $\Q(\zeta_k)$ are).
Part (ii) is immediate.
Part (iii) follows from (ii) since $\#\Rhom(\Z[\zeta_k],\C) = \varphi(k)$,
so each element of $\Rhom(A,\C)$ has $\varphi(k)$ extensions to $A'$.
\end{proof}

\begin{prop}
\label{Aorthog}
Suppose 
$A$ is a reduced order, $\Gamma$ is an abelian group,  
$(B_\gamma)_{\gamma\in \Gamma}$ is a $\Gamma$-grading of $A$,
and
$\langle \, \, \, ,\, \, \,  \rangle$ is the inner product of
Example \ref{orderlatticeex4}.
Suppose $\gamma, \delta \in \Gamma$ and $\gamma \neq \delta$.
Then $\langle B_\gamma,B_\delta \rangle =0$.
\end{prop}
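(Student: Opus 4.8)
The plan is to reduce to finite $\Gamma$, base-change to a cyclotomic extension, and then run an averaging argument over the character action of Lemma \ref{fchithm}. First, if $B_\gamma = 0$ or $B_\delta = 0$ there is nothing to prove, so I assume both are nonzero; then $\gamma,\delta \in \langle \eta\in\Gamma : B_\eta\neq 0\rangle$, which is finite by Proposition \ref{redordercor}(i). Since only the pieces $B_\gamma, B_\delta$ enter the assertion, I may restrict the grading to this subgroup and assume $\Gamma$ is finite. Let $k$ be its exponent and set $A' = A\otimes_\Z\Z[\zeta_k]$. By Lemma \ref{Afacts}(i,iii), $A'$ is a reduced order carrying the inner product $\langle\,,\rangle_{A'}$ of Example \ref{orderlatticeex4}, and $\langle x,y\rangle_{A'} = \varphi(k)\langle x,y\rangle_A$ for $x,y\in A$. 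As $\varphi(k)>0$, it suffices to prove $\langle x,y\rangle_{A'} = 0$ for $x\in B_\gamma$, $y\in B_\delta$.

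I would then establish two complementary properties of $\langle\,,\rangle_{A'}$ relative to the $\hat{\Gamma}_k$-action $\ast$ of Lemma \ref{fchithm}, using that $\chi\ast x = \chi(\gamma)x$ for $x\in B_\gamma[\zeta_k]$, where $\chi(\gamma)\in\langle\zeta_k\rangle\subset A'$ is a ring element. \emph{(a) Automorphism-invariance:} since $\chi\ast(-)$ is a ring automorphism of $A'$ and $\langle a,b\rangle_{A'} = \sum_{\sigma\in\Rhom(A',\C)}\sigma(a)\overline{\sigma(b)}$, precomposition with $\chi\ast(-)$ permutes $\Rhom(A',\C)$, whence $\langle\chi\ast a,\chi\ast b\rangle_{A'} = \langle a,b\rangle_{A'}$. \emph{(b) Adjointness of roots of unity:} for a root of unity $a\in\langle\zeta_k\rangle$, each image $\sigma(a)$ has absolute value $1$, so $\overline{\sigma(a)} = \sigma(a^{-1})$, giving $\langle ax,y\rangle_{A'} = \langle x,a^{-1}y\rangle_{A'}$; taking $a=\chi(\gamma)$ and using that $\chi$ is a homomorphism yields $\langle\chi(\gamma)x,\chi(\delta)y\rangle_{A'} = \langle x,\chi(\gamma^{-1}\delta)y\rangle_{A'}$.

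Combining (a) and (b), for every $\chi\in\hat{\Gamma}_k$ one gets $\langle x,y\rangle_{A'} = \langle\chi\ast x,\chi\ast y\rangle_{A'} = \langle x,\chi(\gamma^{-1}\delta)y\rangle_{A'}$. Averaging over $\chi$ and using additivity of both the ring multiplication and the form, $\#\Gamma\cdot\langle x,y\rangle_{A'} = \langle x,\bigl(\sum_{\chi}\chi(\gamma^{-1}\delta)\bigr)y\rangle_{A'}$. Since $\gamma^{-1}\delta\neq 1$, the character sum $\sum_{\chi}\chi(\gamma^{-1}\delta)$ vanishes in $\Z[\zeta_k]$, exactly as in the proof of Lemma \ref{fchithm}. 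Hence $\langle x,y\rangle_{A'} = 0$, and therefore $\langle x,y\rangle_A = 0$, which is the claim.

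The main obstacle, and the point that demands care, is that $\chi(\gamma)$ appears in two distinct roles: as a genuine ring element of $A'$ (through the $\ast$-action) and through its images $\sigma(\chi(\gamma))\in\C$, which vary with $\sigma$ and cannot be pulled out of the sum defining $\langle\,,\rangle_{A'}$ as a scalar. Property (a) must therefore be invoked in coordinate-free form, while (b) is precisely the identity that fuses the two characters $\chi(\gamma)$ and $\chi(\delta)$ into the single character $\chi(\gamma^{-1}\delta)$ whose character sum annihilates the form. Making these two manipulations interlock is the heart of the argument; the reductions and the vanishing of the character sum are routine.
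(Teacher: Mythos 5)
Your proposal is correct and follows essentially the same route as the paper: reduce to finite $\Gamma$ via Proposition \ref{redordercor}(i), base-change to $A' = A[\zeta_k]$ using Lemma \ref{Afacts}, and exploit the $\hat{\Gamma}_k$-action of Lemma \ref{fchithm} together with automorphism-invariance of the form and the adjointness $\langle a x, y\rangle_{A'} = \langle x, a^{-1}y\rangle_{A'}$ for roots of unity $a$, arriving at $\langle x,y\rangle_{A'} = \langle x, \chi(\gamma)^{-1}\chi(\delta)y\rangle_{A'}$. The only divergence is the endgame: you average over all characters and invoke the vanishing of $\sum_{\chi}\chi(\gamma^{-1}\delta)$ to get $\#\Gamma\cdot\langle x,y\rangle_{A'}=0$, whereas the paper fixes a single $\chi$ with $\chi(\gamma)\neq\chi(\delta)$ and uses that $1-\chi(\gamma)^{-1}\chi(\delta)$ divides $k$ in $\Z[\zeta_k]$ to get $k\langle x,y\rangle_{A'}=0$ --- an inessential difference.
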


\begin{proof}
The conclusion is clear if $B_\gamma =0$ or $B_\delta =0$.
Thus, we can (and do) replace $\Gamma$ by the subgroup
$\langle \gamma\in \Gamma : B_\gamma \neq 0 \rangle$,
which is finite by Proposition \ref{redordercor}(i).

Let $k$ denote the exponent of the group $\Gamma$
and embed 
$A$ in $A' = A[\zeta_k] 
= \bigoplus_{\gamma\in \Gamma} B'_\gamma
$
where $B'_\gamma = B_\gamma\otimes_\Z \Z[\zeta_k]$.
It suffices to show $\langle B'_\gamma,B'_\delta \rangle_{A'} =0$.
Let $\alpha\in B'_\gamma$ and $\beta\in B'_\delta$.
Choose $\chi\in\hat{\Gamma}_k$ such that $\chi(\gamma)\neq \chi(\delta)$.
Since  
$\chi$ acts on $A'$ by a ring automorphism (Lemma \ref{fchithm}) we have
$$
\langle \alpha,\beta \rangle_{A'} =  
\langle \chi\ast(\alpha),\chi\ast(\beta) \rangle_{A'} =  
\langle \chi(\gamma)\alpha,\chi(\delta)\beta \rangle_{A'} =  
\langle \alpha,\chi(\gamma)^{-1}\chi(\delta)\beta \rangle_{A'}.
$$
Thus,
\begin{equation}
\label{deltagammaeqn}
\langle B'_\gamma,(1 - \chi(\gamma)^{-1}\chi(\delta))B'_\delta \rangle_{A'} = 0.
\end{equation}
We have $\chi(\gamma)^{-1}\chi(\delta)\in \langle \zeta_k \rangle \smallsetminus \{ 1\}$.
Thus, $1 - \chi(\gamma)^{-1}\chi(\delta)$ divides $\prod_{i=1}^{k-1}(1-\zeta_k^i)=k$
in $\Z[\zeta_k]$.
By \eqref{deltagammaeqn} we now have 
$0 
= \langle B'_\gamma,kB'_\delta \rangle_{A'} 
= k\langle B'_\gamma,B'_\delta \rangle_{A'}$.
Thus, $\langle B'_\gamma,B'_\delta \rangle_{A'}=0$.
\end{proof}

\begin{prop}
\label{idemsequal}
Suppose 
$A$ is an order, $\Gamma$ is an abelian group, and 
$(B_\gamma)_{\gamma\in \Gamma}$ is a $\Gamma$-grading of $A$.
Then $\Idem(A) = \Idem(B_1)$,
and $A$ is 
connected if and only if $B_1$ is connected.
\end{prop}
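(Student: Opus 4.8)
I want to prove that $\Idem(A) = \Idem(B_1)$ for an order $A$ with a $\Gamma$-grading $(B_\gamma)_{\gamma\in\Gamma}$, and deduce the connectedness statement. The containment $\Idem(B_1)\subset\Idem(A)$ is trivial since $B_1$ is a subring of $A$ containing $1$ (Lemma~\ref{B1ring}), so all the work is in showing every idempotent of $A$ lies in $B_1$. My first step is to reduce to the reduced case. Idempotents are unaffected by nilpotents: the reduction map $A\to A/\sqrt{0_A}$ induces a bijection on idempotents, because idempotents lift uniquely modulo the nilradical (Hensel-type argument, or directly: if $e^2-e\in\sqrt{0_A}$ then a standard formula produces a unique genuine idempotent congruent to $e$). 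By Proposition~\ref{AGamma}(iii), $\sqrt{0_A}$ is homogeneous, so the grading descends to $A/\sqrt{0_A}$, and the degree-$1$ piece of the quotient is $B_1/(\sqrt{0_A}\cap B_1)$. Chasing an idempotent of $A$ through this reduction, it therefore suffices to prove the statement when $A$ is a \emph{reduced} order, which is exactly the setting where the lattice structure of Example~\ref{orderlatticeex4} is available.

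\textbf{The reduced case via orthogonality.}
So assume $A$ is reduced and let $e=(e_\gamma)_{\gamma\in\Gamma}\in\Idem(A)$; I must show $e_\gamma=0$ for $\gamma\neq1$. The key input is Proposition~\ref{Aorthog}: distinct graded pieces are orthogonal under the inner product $\langle\,,\,\rangle$ of Example~\ref{orderlatticeex4}. Since $e^2=e$, I plan to extract information by pairing $e$ with itself component-wise. Using orthogonality of the $B_\gamma$, one has $\langle e,e\rangle=\sum_{\gamma}\langle e_\gamma,e_\gamma\rangle$. The hope is to combine this with the idempotent relation to force all but the degree-$1$ component to vanish. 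Concretely, since $e^2=e$ is homogeneous of degree $1$, comparing degree-$\gamma$ components of $e^2=e$ for $\gamma\neq1$ gives relations among the $e_\gamma$; but the cleaner route is to pair $e$ against the idempotent relation using orthogonality. Because $e$ and $1-e$ are orthogonal (Lemma~\ref{eoneminuse}) and $1\in B_1$ (Lemma~\ref{B1ring}), I can write $\langle e,1\rangle=\langle e,e\rangle$; on the other hand, since $1$ is homogeneous of degree $1$, orthogonality gives $\langle e,1\rangle=\langle e_1,1\rangle$, so only the degree-$1$ part of $e$ pairs nontrivially against $1$.

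\textbf{Finishing and the main obstacle.}
The cleanest finish is to apply the decomposition machinery of Proposition~\ref{redorder} after reducing to the connected case via idempotents of $B_1$. I expect the sharpest argument to run as follows: an idempotent $e\in A$ gives a decomposition $(e,1-e)$ of $1$, and I want to recognize this decomposition as coming from $B_1$. Writing $e=\sum_\gamma e_\gamma$ and $1-e=1_1-e_1-\sum_{\gamma\neq1}e_\gamma$ (using $1\in B_1$), orthogonality of distinct pieces lets me compute $\langle e,1-e\rangle$ piece by piece; Lemma~\ref{eoneminuse} says this equals $0$, and isolating the off-diagonal degree-$1$ contribution against the degree-$\gamma$ contributions should force $\langle e_\gamma,e_\gamma\rangle=0$ for every $\gamma\neq1$, hence $e_\gamma=0$ by positive-definiteness. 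The main obstacle is bookkeeping: ensuring that the idempotent relation $e^2=e$, once split across graded pieces, interacts correctly with the \emph{additive} orthogonality of Proposition~\ref{Aorthog}, since a priori $e^2=e$ is a multiplicative condition while orthogonality is about the additive inner product. The bridge is that $e_1$ is itself constrained: the degree-$1$ part of $e^2=e$ reads $e_1=\sum_{\gamma}e_\gamma e_{\gamma^{-1}}$, and one must verify that the cross terms are controlled by the inner-product estimate rather than interfering with it. Once $e=e_1\in B_1$ is established, $\Idem(A)=\Idem(B_1)$ follows, and the connectedness equivalence is immediate from $\#\Idem(A)=\#\Idem(B_1)$.
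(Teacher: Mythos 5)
Your setup matches the paper's proof: the trivial inclusion $\Idem(B_1)\subset\Idem(A)$, the reduction to the reduced case via homogeneity of the nilradical (Proposition \ref{AGamma}(iii)) and unique lifting of idempotents, and the piece-by-piece computation of $\langle e,1-e\rangle$ using Proposition \ref{Aorthog} and Lemma \ref{eoneminuse}. But your finish has a genuine gap. The identity you arrive at is
\[
0=\langle e,1-e\rangle=\langle e_1,1-e_1\rangle-\sum_{\gamma\neq1}\langle e_\gamma,e_\gamma\rangle,
\]
and from this alone nothing ``isolates'': it says only that $\langle e_1,1-e_1\rangle=\sum_{\gamma\neq1}\langle e_\gamma,e_\gamma\rangle\ge0$, and a priori both sides could be strictly positive. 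Positive-definiteness gives you nothing until you know the left-hand side is also $\le 0$, and that is exactly the nontrivial point. Note that $e_1$ is not yet known to be an idempotent --- it is merely the degree-one component of $e$ --- so you cannot apply Lemma \ref{eoneminuse} to it directly.

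The missing idea, which is how the paper closes the argument, is to read the displayed identity as saying that $(e_1,1-e_1)$ is a decomposition of $1$ in the lattice sense (the two parts sum to $1$ and have inner product $\ge0$), and then to invoke Proposition \ref{redorder}: every decomposition of $1$ in a reduced order arises from an idempotent, the inverse bijection being $(x,y)\mapsto x$. Hence $e_1\in\Idem(A)$, and now Lemma \ref{eoneminuse} applied to the idempotent $e_1$ gives $\langle e_1,1-e_1\rangle=0$, so $\sum_{\gamma\neq1}\langle e_\gamma,e_\gamma\rangle=0$ and positive-definiteness kills each $e_\gamma$ with $\gamma\neq1$. You do name Proposition \ref{redorder} as ``the cleanest finish,'' but the way you propose to deploy it (``after reducing to the connected case via idempotents of $B_1$'') is not the mechanism, and your ``main obstacle'' paragraph about reconciling $e^2=e$ with additive orthogonality is a symptom of the same gap: the multiplicative relation never needs to be split into graded components at all. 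It enters only through Proposition \ref{redorder}, whose content is precisely that the lattice-theoretic condition $\langle e_1,1-e_1\rangle\ge0$ forces the ring-theoretic identity $e_1(1-e_1)=0$.
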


\begin{proof}
The inclusion $\Idem(B_1)\subset\Idem(A)$ is clear.
For the reverse inclusion, take $e=(e_\gamma)_{\gamma\in \Gamma}\in \Idem(A)$.

We first assume $A$ is reduced.
By Lemma \ref{B1ring}(i) we have 
$(1-e)_\gamma = -e_\gamma$ if $\gamma\neq 1$, and $(1-e)_1 = 1-e_1$.
By Lemma \ref{eoneminuse} and Proposition \ref{Aorthog} we have 
$$
0 = \langle e,1-e \rangle =  
\sum_{\gamma\in \Gamma} \langle e_\gamma,(1-e)_\gamma \rangle =   
\langle e_1,1-e_1 \rangle - \sum_{\gamma \neq 1} \langle e_\gamma,e_\gamma \rangle 
\leq  \langle e_1,1-e_1 \rangle,
$$
so $(e_1,1-e_1)$ is a decomposition of $1$.
Now Proposition \ref{redorder} and Lemma \ref{eoneminuse} give
$\langle e_1,1-e_1 \rangle = 0$ so 
$0 = \sum_{\gamma \neq 1} \langle e_\gamma,e_\gamma \rangle $,
and all $e_\gamma$ with $\gamma \neq 1$ are $0$.
Hence $e \in B_1$.

For the general case, the natural maps 
$
\Idem(A) \to \Idem(A/\sqrt{0_A})
$
and
$
\Idem(B_1) \to \Idem(B_1/\sqrt{0_{B_1}})
$
are bijections (this follows, for example, from Theorem 1.5 of \cite{Qalgs}).
By the reduced case, the natural map
$\Idem(B_1/\sqrt{0_{B_1}}) \to \Idem(A/\sqrt{0_A})$ is a bijection.
It follows that the inclusion 
$\Idem(B_1) \hookrightarrow \Idem(A)$ is a bijection.
In particular, $A$ is 
connected if and only if $B_1$ is connected.
\end{proof}

\section{Roots of unity in graded orders}

In this section we prove Theorem \ref{firstthm}(iii).

\begin{rem}
\label{exampremrtunity}
If $A$ is a reduced order with a $\Gamma$-grading
and $\zeta = (\zeta_\gamma)_{\gamma\in \Gamma}\in\mu(A)$, then
by  \eqref{zetarkeqn} and
Proposition \ref{Aorthog} we have
$\rank(A) = \langle\zeta,\zeta\rangle =\sum_\gamma \langle\zeta_\gamma,\zeta_\gamma\rangle$.
If each non-zero term in the latter sum were at least $\rank(A)$, then there 
would be at most one such term, and Theorem~\ref{firstthm}(iii) 
would follow.
However, Example \ref{orderlatticeex7} exhibits a connected reduced order $A$ and 
$x\in A$ with $0 < \langle x,x\rangle < \rank(A)$. Thus,
more is required to prove Theorem~\ref{firstthm}(iii).
\end{rem}

\begin{lem}
\label{indecomponepiece}
If 
$A$ is a reduced order, $\Gamma$ is an abelian group,
$(B_\gamma)_{\gamma\in \Gamma}$ is a $\Gamma$-grading of $A$, 
and $\alpha\in A$ is indecomposable, then
there exists $\delta\in \Gamma$ such that $\alpha\in B_\delta$.
\end{lem}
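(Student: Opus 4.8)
The plan is to show that the indecomposable element $\alpha$ has at most one nonzero homogeneous component; since indecomposability includes the requirement $\alpha\neq 0$, this forces $\alpha$ to lie in a single piece $B_\delta$. Write $\alpha=(\alpha_\gamma)_{\gamma\in\Gamma}$ with $\alpha_\gamma\in B_\gamma$, so that $\alpha=\sum_{\gamma}\alpha_\gamma$ (a finite sum). The crucial ingredient is Proposition \ref{Aorthog}: for the inner product of Example \ref{orderlatticeex4}, distinct homogeneous pieces are orthogonal, i.e.\ $\langle B_\gamma,B_{\gamma'}\rangle=0$ whenever $\gamma\neq\gamma'$. This is what translates the algebraic grading into a geometric orthogonal splitting of the lattice $A$.

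The key step proceeds by contradiction. Suppose $\alpha$ has two or more nonzero components. Fix an index $\gamma_0$ with $\alpha_{\gamma_0}\neq 0$ and set $x=\alpha_{\gamma_0}$ and $y=\alpha-x=\sum_{\gamma\neq\gamma_0}\alpha_\gamma$. By orthogonality, $\langle x,y\rangle=\sum_{\gamma\neq\gamma_0}\langle\alpha_{\gamma_0},\alpha_\gamma\rangle=0\geq 0$, so $(x,y)$ is a decomposition of $\alpha$ in the sense defined just before Remark \ref{zxyineq}. Since by assumption there is at least one further nonzero component, both $x\neq 0$ and $y\neq 0$, so this decomposition is non-trivial, contradicting the indecomposability of $\alpha$. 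Hence $\alpha$ has at most one nonzero component, and being nonzero it has exactly one, giving $\alpha\in B_\delta$ for the corresponding $\delta\in\Gamma$.

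I do not anticipate a genuine obstacle here: the entire weight of the argument is carried by the orthogonality of Proposition \ref{Aorthog}, which makes the defining inequality $\langle x,y\rangle\geq 0$ of a decomposition hold automatically, and in fact with equality. The only point requiring a moment's care is the bookkeeping that the complementary part $y$ is genuinely nonzero, which is precisely the hypothesis that $\alpha$ fails to be homogeneous; everything else is a direct application of the orthogonal splitting together with the definition of indecomposability.
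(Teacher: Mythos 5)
Your proof is correct and follows essentially the same route as the paper: both split off one nonzero homogeneous component $\alpha_{\gamma_0}$, use Proposition \ref{Aorthog} to see that $(\alpha_{\gamma_0},\alpha-\alpha_{\gamma_0})$ is a decomposition, and invoke indecomposability to conclude the remainder vanishes. The only difference is cosmetic: you phrase it as a contradiction, while the paper argues directly that the decomposition must be trivial.
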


\begin{proof}
Pick $\delta\in \Gamma$ with $\alpha_\delta \neq 0$.
Then $\alpha = \alpha_\delta + (\alpha - \alpha_\delta)$,
and we have $\alpha_\delta \in B_\delta$ and
$\alpha - \alpha_\delta \in \bigoplus_{\gamma\neq\delta} B_\gamma$,
so $\langle \alpha_\delta,\alpha - \alpha_\delta \rangle = 0$ 
by Proposition \ref{Aorthog}.
Since 
$(\alpha_\delta,\alpha - \alpha_\delta)$ cannot be a non-trivial decomposition of
the indecomposable element $\alpha$,
we have $\alpha - \alpha_\delta = 0$ as desired.
\end{proof}

\begin{prop}
\label{connhomogprop}
If 
$A$ is an order, $\Gamma$ is an abelian group,
$(B_\gamma)_{\gamma\in \Gamma}$ is a $\Gamma$-grading of $A$,
and $B_1$ is connected, 
then  
$\mu(A) \subset \bigcup_{\gamma\in \Gamma} B_\gamma$.
\end{prop}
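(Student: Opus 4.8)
The plan is to reduce to the reduced case and then exploit the fact that, over a reduced order, every indecomposable element lives in a single graded piece (Lemma \ref{indecomponepiece}). The key structural observation is that roots of unity in a reduced order have a fixed norm, namely $\langle\zeta,\zeta\rangle = \rank(A)$ by \eqref{zetarkeqn}, and one expects a root of unity in a connected order to be essentially a shortest vector, hence indecomposable. Once indecomposability is established, Lemma \ref{indecomponepiece} places $\zeta$ in some $B_\delta$ and we are done.

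First I would treat the reduced case. Let $\zeta=(\zeta_\gamma)_{\gamma\in\Gamma}\in\mu(A)$. By Proposition \ref{Aorthog} the graded pieces are mutually orthogonal, so $\langle\zeta,\zeta\rangle = \sum_{\gamma}\langle\zeta_\gamma,\zeta_\gamma\rangle$, and by \eqref{zetarkeqn} this equals $\rank(A)$. The task is to show that exactly one $\zeta_\gamma$ is non-zero, which by Lemma \ref{indecomponepiece} amounts to showing $\zeta$ is indecomposable. Here is where I would use the hypothesis that $B_1$ is connected: by Proposition \ref{idemsequal}, $A$ is then connected, so by Corollary \ref{Aconnindecomp} the element $1$ is indecomposable. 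The natural move is to transport indecomposability of $1$ to indecomposability of $\zeta$. Since $\zeta$ is a unit with $\zeta^{-1}\in\mu(A)$ as well, multiplication by $\zeta$ is an additive automorphism of $A$; I would check that it is an isometry of the lattice, using that for $\sigma\in\Rhom(A,\C)$ one has $|\sigma(\zeta)|=1$ (as $\zeta$ is a root of unity), so $\sigma(\zeta x)\overline{\sigma(\zeta y)} = \sigma(\zeta)\overline{\sigma(\zeta)}\,\sigma(x)\overline{\sigma(y)} = \sigma(x)\overline{\sigma(y)}$, whence $\langle\zeta x,\zeta y\rangle = \langle x,y\rangle$. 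An isometry carries decompositions to decompositions and preserves the indecomposability property; since $1=\zeta^{-1}\cdot\zeta$ and $1$ is indecomposable, applying the isometry ``multiply by $\zeta$'' shows $\zeta=\zeta\cdot 1$ is indecomposable. Lemma \ref{indecomponepiece} then yields $\zeta\in B_\delta$ for some $\delta$.

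For the general case I would pass to $A/\sqrt{0_A}$. By Proposition \ref{AGamma}(iii) the nilradical is homogeneous, so the grading descends to a $\Gamma$-grading $(\overline{B}_\gamma)_{\gamma\in\Gamma}$ of the reduced order $A/\sqrt{0_A}$, with $\overline{B}_1 = B_1/(\sqrt{0_A}\cap B_1)$. I would verify that connectedness of $B_1$ implies connectedness of $\overline{B}_1$: the idempotents of $B_1$ and of its reduction agree (reduction induces a bijection on idempotents, e.g.\ by Theorem 1.5 of \cite{Qalgs}), so $\overline{B}_1$ is again connected. Given $\zeta\in\mu(A)$, its image $\bar\zeta\in\mu(A/\sqrt{0_A})$ lands in a single piece $\overline{B}_\delta$ by the reduced case. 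Thus all components $\zeta_\gamma$ for $\gamma\neq\delta$ are nilpotent. The remaining point is to promote this to $\zeta_\gamma=0$ for $\gamma\neq\delta$; I expect this to follow because a root of unity $\zeta$ has each of its ``off-$\delta$'' homogeneous components simultaneously nilpotent and constrained by the relation $\zeta^n=1$, forcing them to vanish in a reduced-modulo-nilradical argument, or alternatively by noting that $\zeta$ itself, being a root of unity, is separable and hence equals its own semisimple part, so its homogeneous components cannot be nilpotent unless zero.

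The main obstacle I anticipate is the isometry argument in the reduced case—specifically, making precise that multiplication by a root of unity preserves indecomposability. The subtlety flagged in Remark \ref{exampremrtunity} is that one cannot simply argue that each non-zero component has norm $\geq\rank(A)$, since Example \ref{orderlatticeex7} shows short vectors exist; so the proof genuinely needs the multiplicative structure (the isometry) rather than a crude norm bound. The lifting step from the reduced quotient back to $A$ in the general case is the second delicate point, and I would want to confirm that the separability of roots of unity (or an explicit argument that nilpotent homogeneous summands of a unit must vanish) closes the gap cleanly.
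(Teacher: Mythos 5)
Your reduced case is exactly the paper's argument: by Proposition \ref{idemsequal} and Corollary \ref{Aconnindecomp} the element $1$ is indecomposable, multiplication by $\zeta$ is a lattice automorphism of $A$ (your verification via $|\sigma(\zeta)|=1$ is the right one), hence $\zeta$ is indecomposable, and Lemma \ref{indecomponepiece} finishes. The genuine gap is in the non-reduced case, at precisely the step you flagged. After passing to $A/\sqrt{0_A}$ you know $\zeta_\gamma\in\sqrt{0_{A_\Q}}$ for all $\gamma\neq\delta$, and you must show these components are zero. Neither of your two proposed ways of closing this works as stated. The fallback claim that ``nilpotent homogeneous summands of a unit must vanish'' is false for general units: in $\Z[\varepsilon]=\Z[X]/(X^2)$ with the grading of Example \ref{nounivgradexs}(vi), the unit $1+\varepsilon$ has the non-zero nilpotent homogeneous summand $\varepsilon$. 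So any correct argument must use that $\zeta$ is a root of unity, not merely a unit. Your Jordan-decomposition reasoning does use this, but the conclusion does not follow from the premise: writing $\nu=\sum_{\gamma\neq\delta}\zeta_\gamma$, the uniqueness of the semisimple-plus-nilpotent decomposition, applied to $\zeta_\delta=\zeta-\nu$, only tells you that $\zeta$ is the semisimple part of $\zeta_\delta$ and $-\nu$ its nilpotent part. To conclude $\nu=0$ you need to know that $\zeta_\delta$ (equivalently, each $\zeta_\gamma$) is itself separable; the separability of $\zeta$ alone gives no contradiction.

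What is missing is exactly the statement that the homogeneous components of a separable element are separable, i.e.\ that $E_\sep$ is homogeneous in $E=A_\Q$. That is Proposition \ref{Esephomog}, which the paper proves by the nontrivial cyclotomic base-change and $\hat{\Gamma}_k$-action technique (the same mechanism as in Proposition \ref{AGamma}), and which the paper invokes at just this point: $\zeta$ satisfies the separable polynomial $X^n-1$, so $\zeta\in E_\sep$, so each $\zeta_\gamma\in E_\sep$ by Proposition \ref{Esephomog}, whence $\zeta_\gamma\in E_\sep\cap\sqrt{0_E}=\{0\}$ for $\gamma\neq\delta$. So your skeleton for the general case (pass to $A/\sqrt{0_A}$ using homogeneity of the nilradical, transfer connectedness via the idempotent bijection, apply the reduced case, then kill the nilpotent components) is the paper's, but the decisive ingredient closing the last step is absent from your proposal and is not obtainable from the Jordan decomposition for free.
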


\begin{proof}
Proposition \ref{idemsequal} shows that $A$ is connected.
Take $\zeta = (\zeta_\gamma)_{\gamma\in \Gamma}\in\mu(A)$. 

First suppose $A$ is reduced.
Then $1$ is indecomposable in $A$ by Corollary \ref{Aconnindecomp}.
The map $x\mapsto \zeta x$ is a lattice
automorphism of $A$.
Hence $\zeta$ is also indecomposable in $A$.
By Lemma \ref{indecomponepiece}, 
there exists $\delta\in \Gamma$ such that $\zeta\in B_\delta$, as desired.

For the general case, applying Proposition \ref{Esephomog}
to $E=A_\Q$ shows that $\zeta_\gamma \in E_\sep$ for all $\gamma\in \Gamma$.
Also, 
$\zeta\bmod \sqrt{0_A} \in A/\sqrt{0_A} = 
\bigoplus_{\gamma\in \Gamma} B_\gamma/(\sqrt{0_A} \cap B_\gamma)$
is a root of unity, so by the reduced case there is a unique
$\delta\in \Gamma$ such that $(\zeta\bmod \sqrt{0_A})_\delta$
is a root of unity and for all $\gamma\neq\delta$ we have 
$0 = (\zeta\bmod \sqrt{0_A})_\gamma = \zeta_\gamma\bmod (\sqrt{0_A}\cap B_\gamma)$.
Thus for all $\gamma\neq\delta$ we have 
$\zeta_\gamma \in \sqrt{0_E} \cap E_\sep = \{ 0\}$.
\end{proof}

\section{Universal gradings---lemmas and examples}
\label{lemexsect}

The results in this section follow in a straightforward
way from the definitions, and are left as exercises.

\begin{lem}
Suppose $A$ is a  
ring and $\Gamma$ is an abelian group.
\begin{enumerate}
\item
Suppose 
$\B = (B_\gamma)_{\gamma\in \Gamma}$ is a $\Gamma$-grading of $A$,
suppose $\Delta$ is an abelian group, 
suppose $f : \Gamma \to \Delta$ is a
group homomorphism, and let
$f_\ast(\B) = 
(\sum_{\gamma\in f^{-1}(\delta)}B_\gamma)_{\delta\in \Delta}.
$
Then  
$f_\ast(\B)$ is a $\Delta$-grading of $A$.
\item
The map $\Gamma \mapsto \{ \text{$\Gamma$-gradings of $A$} \}$
is a covariant functor from the category of abelian groups to
the category of sets.
\end{enumerate}
\end{lem}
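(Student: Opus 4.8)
The plan is to verify directly the two defining conditions of Definition~\ref{gradingdefn} for part (i), and then to deduce part (ii) by checking functoriality on identity morphisms and on composites. For part (i), I would write $C_\delta = \sum_{\gamma\in f^{-1}(\delta)}B_\gamma$, so that $f_\ast(\B) = (C_\delta)_{\delta\in\Delta}$. Each $C_\delta$ is an additive subgroup of $A$, being a sum of additive subgroups, so only the two grading axioms remain.

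For the multiplicative condition, take $\delta,\delta'\in\Delta$, $x\in C_\delta$, and $y\in C_{\delta'}$. Writing $x = \sum_{\gamma\in f^{-1}(\delta)}x_\gamma$ and $y = \sum_{\gamma'\in f^{-1}(\delta')}y_{\gamma'}$ with $x_\gamma\in B_\gamma$ and $y_{\gamma'}\in B_{\gamma'}$, bilinearity of multiplication reduces matters to the individual products $x_\gamma y_{\gamma'}\in B_\gamma B_{\gamma'}\subset B_{\gamma\gamma'}$. The key point is that $f$ is a group homomorphism, so $f(\gamma\gamma') = f(\gamma)f(\gamma') = \delta\delta'$ and hence $\gamma\gamma'\in f^{-1}(\delta\delta')$; thus $x_\gamma y_{\gamma'}\in C_{\delta\delta'}$, and therefore $xy\in C_{\delta\delta'}$.

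For the direct-sum condition, the essential observation is that the fibers $\{f^{-1}(\delta)\}_{\delta\in\Delta}$ partition $\Gamma$. Since $A = \bigoplus_{\gamma\in\Gamma}B_\gamma$ is an internal direct sum, regrouping the summands according to this partition yields $\bigoplus_{\delta\in\Delta}\bigl(\bigoplus_{\gamma\in f^{-1}(\delta)}B_\gamma\bigr) = \bigoplus_{\gamma\in\Gamma}B_\gamma = A$. Concretely, surjectivity of the structure map $\bigoplus_\delta C_\delta\to A$ follows by grouping any expansion $\sum_\gamma x_\gamma$ of an element of $A$ according to $\delta = f(\gamma)$, and injectivity follows because a vanishing sum $\sum_\delta c_\delta = 0$ unwinds to a vanishing $B_\gamma$-expansion, forcing every homogeneous component—and hence every $c_\delta$—to be zero.

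Finally, part (ii) follows from two routine checks. For the identity morphism one has $\id^{-1}(\gamma) = \{\gamma\}$, so $(\id_\Gamma)_\ast\B = \B$. For composition, given $f:\Gamma\to\Delta$ and $g:\Delta\to\Theta$, the set-theoretic identity $(g\circ f)^{-1}(\theta) = \bigsqcup_{\delta\in g^{-1}(\theta)}f^{-1}(\delta)$ shows that $\sum_{\gamma\in(gf)^{-1}(\theta)}B_\gamma = \sum_{\delta\in g^{-1}(\theta)}\bigl(\sum_{\gamma\in f^{-1}(\delta)}B_\gamma\bigr)$, which is precisely $(g\circ f)_\ast = g_\ast\circ f_\ast$. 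I expect no serious obstacle anywhere; the only point demanding a little care is the directness in the second axiom of part (i), where one must use that a partition of the index set converts the single direct sum over $\Gamma$ into an iterated direct sum over $\Delta$ and its fibers.
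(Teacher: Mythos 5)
Your proof is correct, and it is exactly the straightforward verification from Definition~\ref{gradingdefn} that the paper intends: the paper states that the results of this section ``follow in a straightforward way from the definitions'' and leaves them as exercises. Your handling of the two delicate points---that $f$ being a homomorphism sends the fiber product structure to $f^{-1}(\delta)f^{-1}(\delta')\subset f^{-1}(\delta\delta')$, and that the fibers of $f$ partition $\Gamma$ so the direct sum regroups correctly---is precisely what the exercise requires.
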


An  abelian group $H$ is called {\bf indecomposable} if $H \neq 1$ and
whenever $H = H_1 \oplus H_2$ with abelian groups $H_1$ and $H_2$
then $H_1 =1$ or $H_2 =1$.

\begin{lem}
\label{univlem}
Suppose $A$ is a  
ring.
\begin{enumerate}
\item
If $(\Gamma_1, (B_\gamma)_{\gamma\in \Gamma_1})$ and 
$(\Gamma_2,(C_\gamma)_{\gamma\in \Gamma_2})$
are  universal gradings of $A$, 
then there is a unique group isomorphism 
$\sigma : \Gamma_1 \to \Gamma_2$ such that for all
$\gamma\in\Gamma_1$ we have $B_\gamma = C_{\sigma(\gamma)}$.
\item
If 
$(\Gamma,(A_\gamma)_{\gamma\in \Gamma})$ is  a universal grading of $A$, and $(C_\delta)_{\delta\in \Delta}$ is
a  $\Delta$-grading of $A$, then for each $\delta\in \Delta$
for which $C_\delta$ is an indecomposable abelian group
there exists $\gamma\in \Gamma$ with $C_\delta = A_\gamma$.
\end{enumerate}
\end{lem}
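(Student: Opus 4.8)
The plan is to derive both parts directly from the universal property in Definition \ref{univgradingdefn}, using the functoriality of pushforward recorded in the preceding lemma. For (i), I would first invoke universality of $(\Gamma_1,\B)$ applied to the $\Gamma_2$-grading $\CC$ to obtain a unique homomorphism $\sigma:\Gamma_1\to\Gamma_2$ with $\sigma_\ast\B=\CC$, and symmetrically universality of $(\Gamma_2,\CC)$ to obtain a unique $\tau:\Gamma_2\to\Gamma_1$ with $\tau_\ast\CC=\B$. Since pushforward is functorial, $(\tau\circ\sigma)_\ast\B=\tau_\ast\CC=\B=(\id_{\Gamma_1})_\ast\B$; applying the uniqueness clause of the universal property of $(\Gamma_1,\B)$ with target group $\Gamma_1$ and test grading $\B$ forces $\tau\circ\sigma=\id_{\Gamma_1}$, and symmetrically $\sigma\circ\tau=\id_{\Gamma_2}$, so $\sigma$ is an isomorphism.

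It then remains to identify the pieces and to check uniqueness. Because $\sigma$ is a bijection, the fibre $\sigma^{-1}(\delta)$ is a singleton for each $\delta\in\Gamma_2$, so the relation $C_\delta=\sum_{\gamma\in\sigma^{-1}(\delta)}B_\gamma$ coming from $\sigma_\ast\B=\CC$ collapses to $C_{\sigma(\gamma)}=B_\gamma$ for all $\gamma\in\Gamma_1$, as required. For uniqueness, if an isomorphism $\sigma'$ also satisfies $B_\gamma=C_{\sigma'(\gamma)}$ for all $\gamma$, then the same singleton-fibre computation gives $\sigma'_\ast\B=\CC$, and the uniqueness in the universal property of $(\Gamma_1,\B)$ yields $\sigma'=\sigma$.

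For (ii), universality of $(\Gamma,(A_\gamma)_{\gamma\in\Gamma})$ applied to $(C_\delta)_{\delta\in\Delta}$ produces a homomorphism $f:\Gamma\to\Delta$ with $C_\delta=\bigoplus_{\gamma\in f^{-1}(\delta)}A_\gamma$ for every $\delta$, the sum being direct since the $A_\gamma$ are the pieces of a grading. Fix $\delta$ with $C_\delta$ indecomposable; then $C_\delta\neq 0$, so at least one summand $A_\gamma$ with $\gamma\in f^{-1}(\delta)$ is non-zero. If two distinct indices gave non-zero summands, one could split the above direct sum into two non-trivial subgroups, contradicting indecomposability; hence exactly one $A_\gamma$ is non-zero and $C_\delta=A_\gamma$ for that $\gamma$.

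Both arguments are short and essentially formal. The only point that needs care is the correct application of the uniqueness clause in the universal property, taking the target group to be $\Gamma_1$ itself and the test grading to be $\B$; this is what pins down $\tau\circ\sigma$ as the identity rather than merely some endomorphism of $\Gamma_1$ inducing $\B$, and it is the standard mechanism by which a representing object is unique up to unique isomorphism.
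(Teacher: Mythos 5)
Your proof is correct and is exactly the ``straightforward from the definitions'' argument the paper intends (the paper leaves this lemma as an exercise): part (i) is the standard uniqueness-of-representing-objects argument via functoriality of $f_\ast$ and the uniqueness clause applied to the test grading $\B$ itself, and part (ii) correctly exploits that the fibrewise sums $C_\delta=\bigoplus_{\gamma\in f^{-1}(\delta)}A_\gamma$ are direct, so indecomposability forces exactly one non-zero summand. No gaps.
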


\begin{exs} 
\label{nounivgradexs}
We leave verifications of the below statements as an exercise.
A hint is to use Lemma \ref{univlem}(ii).
\begin{enumerate}
\item
The cyclotomic field $\Q(\zeta_8)$ has
a $\Z/4\Z$-grading $\bigoplus_{j=0}^3 \Q\cdot\zeta_8^j$
and a $(\Z/2\Z\times \Z/2\Z)$-grading 
$\Q \oplus \Q \mathrm{i} \oplus \Q \sqrt{2} \oplus \Q \mathrm{i}\sqrt{2}$
and has no universal grading.
For $t\ge 4$, the field $\Q(\zeta_{2^t})$ equals $\Q(\eta)$, where  
$\eta=\zeta_{2^t}\sqrt{2}$,
it has the two gradings 
$\bigoplus_{j=0}^{2^{t-1}-1}\Q\cdot\zeta_{2^t}^j$ and 
$\bigoplus_{j=0}^{2^{t-1}-1}\Q\cdot\eta^j$ 
by a cyclic group of order $2^{t-1}$, and it has no universal grading. This example is taken from \cite{DvG}.
\item
The field $\Q(\sqrt[3]{2},\zeta_3)$ has three different $\Z/6\Z$-gradings
in which all pieces have dimension one over $\Q$, and has no universal grading.
\item
A $\Z/2\Z$-grading of $\F_{5^6}$ is $\F_{5^3} \oplus \F_{5^3}\cdot\sqrt{2}$,
a $\Z/3\Z$-grading of $\F_{5^6}$ is 
$\F_{5^2} \oplus \F_{5^2}\cdot\zeta_9 \oplus \F_{5^2}\cdot\zeta_{9}^2$,
but $\F_{5^6}$ has no universal grading.
\item
If $d \in \Z$ and $d$ is not a square, 
then the $\Z/2\Z$-grading $\Z \oplus \sqrt{d}\Z$
is the universal grading on $\Z[\sqrt{d}]$.
If $A$ is an order of rank $2$ and odd discriminant, then
the grading by the trivial group is the universal grading on $A$.
\item
The ring $\Z[\sqrt[3]{2},\zeta_3]$ has a universal grading 
$\bigoplus_{j=0}^2 \Z[\zeta_3]\sqrt[3]{2}^j$ by a cyclic group of order $3$.
\item
The ring $\Z[X]/(X^2) = \Z[\varepsilon]$ has a universal grading by an infinite cyclic group $\Gamma= \langle c \rangle$, with $\Z[\varepsilon]_1=\Z$, and 
$\Z[\varepsilon]_c = \Z\varepsilon$, and  $\Z[\varepsilon]_\gamma = 0$ 
for all $\gamma \in \Gamma \smallsetminus \{ 1,c\}$.
This also gives a $\Z/n\Z$-grading on the ring for every $n\in\Z_{>1}$.
This non-reduced graded order has no universal grading by a
finite abelian group.
\item
Let $A$ be the subring of $\Z[X]/(X^4)$ generated by the images of 
$1$, $2X(1+X)$, and $2X^2(1+X)$.
Then $A$ is a non-reduced order, and the grading of $A$ by the trivial group 
is the universal grading of $A$.
\item
The ring  
$\Z[X,Y]/(X,Y)^2 = \Z[\varepsilon, \eta]$,
with $\varepsilon = X \bmod (X,Y)^2$ and $\eta = Y \bmod (X,Y)^2$, has 
no universal grading. 
If $\Gamma$ is any group, and $\sigma$ and $\tau$ are non-identity distinct
elements of $\Gamma$, then one grading is given by
$B_1 =\Z$, $B_\sigma =\Z\varepsilon$, $B_\tau = \Z\eta$
and another by 
$B_1 =\Z$, $B_\sigma =\Z(\varepsilon + \eta)$, $B_\tau = \Z(\varepsilon + 2\eta)$.
\item
If $\Gamma$ is an abelian group, then 
the universal grading of the group ring $\Z[\Gamma]$
is the natural $\Gamma$-grading $(\Z\cdot\gamma)_{\gamma\in \Gamma}$.
\end{enumerate}
\end{exs}

\section{$S$-decompositions of lattices}
\label{Sdecompsect}

We give a result on $S$-decompositions of lattices 
that we will use 
in \S \ref{Thm11pf} to prove Theorem \ref{univgradingorderthm}.

If $L$ is a lattice and
$S$ is a set, then an
{\bf $S$-decomposition} of $L$ is a system 
$(L_s)_{s\in S}$ of subgroups of $L$ such that:
\begin{enumerate}
\item
if $s,t\in S$ and $s\neq t$, then $\langle L_s,L_t\rangle =0$, and
\item
$\sum_{s\in S} L_s = L$.
\end{enumerate}
This implies that
$L = \bigoplus_{s\in S} L_s$,
in the sense that the map  
$\bigoplus_{s\in S} L_s \to L, 
(\alpha_s)_{s\in S} \mapsto \sum_{s\in S}\alpha_s$
is bijective.

An $S$-decomposition $(L_s)_{s\in S}$ of a lattice $L$ is
{\bf universal} if for every set $T$ and
every $T$-decomposition $(M_t)_{t\in T}$ of $L$, 
there is a unique map $f : S \to T$ such that
for all $t\in T$ we have $M_t = \sum_{s\in f^{-1}(t)} L_s$.

If a set $S$ and a universal $S$-decomposition exist for a given lattice, then by a standard argument $S$ and that decomposition are, in an obvious sense, unique up to a unique isomorphism.

\begin{thm}
\label{EicherThm}
Every lattice has a  
universal $S$-decomposition for some 
finite set $S$, and for that universal $S$-decomposition all $L_s$ are non-zero.
\end{thm}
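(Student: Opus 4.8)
The plan is to realize the universal $S$-decomposition as the orthogonal decomposition of $L$ into the ``connected components'' of its indecomposable vectors. Recall (as noted after Remark~\ref{zxyineq}) that $L$ is generated by its indecomposable elements. First I would form the graph whose vertex set is the set of indecomposable vectors of $L$, placing an edge between indecomposable $u$ and $v$ exactly when $\langle u,v\rangle\neq 0$. Let $S$ be the set of connected components of this graph, and for each $s\in S$ let $L_s$ be the subgroup of $L$ generated by the indecomposable vectors lying in the component $s$. Since every component is non-empty and indecomposable vectors are by definition non-zero, each $L_s\neq 0$.

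Next I would check that $(L_s)_{s\in S}$ is an $S$-decomposition. Spanning is immediate: every indecomposable vector lies in some $L_s$, and these generate $L$. For orthogonality, if $s\neq t$ it suffices to show $\langle u,v\rangle=0$ for indecomposable $u$ in component $s$ and $v$ in component $t$; but $\langle u,v\rangle\neq 0$ would place an edge between $u$ and $v$, forcing $s=t$. Thus conditions (i) and (ii) hold, and $L=\bigoplus_{s\in S}L_s$ is an orthogonal direct sum. Finiteness of $S$ then follows by comparing dimensions: the $(L_s)_\R$ are pairwise orthogonal non-zero subspaces of $L_\R$, so $\#S\le\dim_\R L_\R=\rank L<\infty$.

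The heart of the argument is universality, which rests on one key observation: every indecomposable vector respects any orthogonal decomposition. Concretely, if $(M_t)_{t\in T}$ is any $T$-decomposition and $v$ is indecomposable, write $v=\sum_{t\in T}v_t$ with $v_t\in M_t$; the $v_t$ are pairwise orthogonal, so if two of them were non-zero, say $v_{t_0}\neq 0$ and $v-v_{t_0}\neq 0$, then $\langle v_{t_0},v-v_{t_0}\rangle=0\ge 0$ would exhibit a non-trivial decomposition of $v$, contradicting indecomposability. Hence $v$ lies in a single $M_t$. This assignment is constant on each connected component, because indecomposable $u,v$ with $\langle u,v\rangle\neq 0$ cannot lie in distinct (mutually orthogonal) pieces $M_t$; so it descends to a map $f:S\to T$ with $L_s\subseteq M_{f(s)}$ for all $s$. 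Since the $M_t$ are in direct sum and each $L_s\neq 0$, the piece $M_t$ containing $L_s$ is unique, which both forces $f$ to be the map just defined (uniqueness of $f$) and gives $\sum_{s\in f^{-1}(t)}L_s\subseteq M_t$. The reverse inclusion follows by expanding any $m\in M_t$ in the basis-free direct sum $L=\bigoplus_s L_s$ and invoking uniqueness of the $M$-expansion to conclude $m\in\sum_{f(s)=t}L_s$. Thus $M_t=\sum_{s\in f^{-1}(t)}L_s$, establishing universality.

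The main obstacle is precisely the key observation of the third paragraph: that the paper's notion of indecomposability (phrased via $\langle x,y\rangle\ge 0$) is strong enough to force an indecomposable vector into a single summand of an \emph{orthogonal} decomposition. Everything else---spanning, orthogonality, finiteness, and the bookkeeping for $f$---is then routine, but this step is what makes the component construction produce a genuinely universal object rather than merely some orthogonal decomposition.
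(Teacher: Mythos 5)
Your proof is correct, and it is essentially the paper's approach: the paper does not write out a proof but cites Eichler's classical theorem via the proof of Theorem 6.4 on p.~27 of \cite{Milnor}, and that argument is exactly yours---connected components of the graph of indecomposable vectors, with the key lemma that an indecomposable vector must fall into a single summand of any orthogonal decomposition. Your write-up moreover correctly upgrades the classical uniqueness-up-to-permutation statement to the categorical universality the paper actually needs, which is precisely why the paper refers to the \emph{proof} in \cite{Milnor} rather than to the statement.
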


Theorem \ref{EicherThm} is classical
and due to Eichler,
and can be easily proved using
the proof of Theorem 6.4 on p.~27 of \cite{Milnor}.

\section{Proof of Theorem \ref{univgradingorderthm}}
\label{Thm11pf}
We now prove Theorem \ref{univgradingorderthm}.
Since $A$ is a reduced order, it has a lattice structure as in Example~\ref{orderlatticeex4}.
By Theorem \ref{EicherThm} the lattice $A$ has a
universal $S$-decomposition $A = \bigoplus_{s\in S} L_s$ for some
finite 
set $S$, and each $L_s$ is non-zero.
Let $\Gamma$ be the abelian 
group with generating set $S$ and
relations $s_1 \cdot s_2 = s_3$ whenever there are $x\in L_{s_1}$ and
$y\in L_{s_2}$ such that when we write
$xy = \sum_{s\in S} z_s$ with $z_s \in L_s$ we have $z_{s_3} \neq 0$.
This produces a   
group $\Gamma$ equipped with a map 
$\h : S \to \Gamma$, $s \mapsto s$, and we obtain a 
$\Gamma$-decomposition 
$(B_\gamma)_{\gamma\in \Gamma}$ of $A$ with
$B_\gamma = \sum_{s\in \h^{-1}(\gamma)} L_s$.
If $s_1\in \h^{-1}(\gamma_1)$ and ${s_2}\in \h^{-1}(\gamma_2)$
with $\gamma_1,\gamma_2\in\Gamma$, then
$$
L_{s_1}\cdot L_{s_2} \subset  
\sum_{u\in S, u={s_1}{s_2}}L_u
 \subset \sum_{u\in \h^{-1}(\gamma_1\gamma_2)} L_u = B_{\gamma_1\gamma_2}.
$$
Thus $B_{\gamma_1}B_{\gamma_2} \subset B_{\gamma_1\gamma_2}$, 
so the $\Gamma$-decomposition $\B = (B_\gamma)_{\gamma\in \Gamma}$ 
is a $\Gamma$-grading.

Since 
each $L_s$ is non-zero,
we have that $B_\gamma \neq 0$ for all $\gamma \in h(S)$, so
$\Gamma \supset \langle \gamma\in\Gamma : B_\gamma \neq 0\rangle
\supset \langle h(S) \rangle \supset \Gamma$.
It now follows from Proposition \ref{redordercor}(i) that $\Gamma$ is finite.

To show the $\Gamma$-grading
$\B$  
is universal, let $\CC =(C_\delta)_{\delta\in \Delta}$
be a $\Delta$-grading of $A$, with $\Delta$ an abelian group.
By Proposition \ref{Aorthog}, 
we have that $\CC$ is a $\Delta$-decomposition of the lattice $A$,
so there is a unique map $g : S \to \Delta$
such that for all $\delta\in \Delta$ we have 
$C_\delta = \sum_{s\in g^{-1}(\delta)} L_s$.
If ${s_1}{s_2}=u$ is one of the relations for the group $\Gamma$,
then for some $x\in L_{s_1} \subset C_{g({s_1})}$ and $y\in L_{s_2} \subset C_{g({s_2})}$
we have a product $xy$ with $L_u$-coordinate non-zero,
so with $C_{g(u)}$-coordinate non-zero.
But $C_{g({s_1})}C_{g({s_2})} \subset C_{g({s_1})g({s_2})}$ so 
$g(u) = g({s_1})g({s_2})$.
So there is a unique group homomorphism $f : \Gamma \to \Delta$
such that $f\circ \h = g$.
This implies that 
$f_\ast\B =\CC,
$
so the map $f \mapsto f_\ast\B$ is surjective.
To show it is  injective, suppose $\tilde{f} : \Gamma \to \Delta$
is a  group homomorphism such that
$\tilde{f}_\ast\B =\CC$.
By the uniqueness of $f$ we have 
$f\circ\h = \tilde{f}\circ\h$.
Since $\Gamma = \langle \h(S)\rangle$ it follows that
$f = \tilde{f}$, so the map $f \mapsto f_\ast\B$ is injective.

\section{Proof of Theorem \ref{noncyclicthm}}
\label{noncyclicthmpf}

\begin{lem}
\label{noncycliclem}
Suppose $E=\bigoplus_{\gamma \in \Gamma} D_\gamma$ is a finite \'etale $\Q$-algebra graded by a finite abelian group $\Gamma$, suppose 
$\Gamma = \langle \gamma\in \Gamma : D_\gamma \neq 0 \rangle$,
and suppose $D_1$ is a field.
Then $\dim_{D_1}D_\gamma=1$ for all $\gamma \in \Gamma$. 
\end{lem}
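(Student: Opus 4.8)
The plan is to first show that every nonzero homogeneous element of $E$ is a unit whose inverse is again homogeneous, and then to read off the dimension statement by a direct graded computation. The crucial observation is this: since $\Gamma$ is finite, any $\gamma \in \Gamma$ has finite order $m$, and then the grading axiom gives $x^m \in D_{\gamma^m} = D_1$ for every $x \in D_\gamma$. If $x \neq 0$, then $x^m \neq 0$, because $E$, being a finite \'etale $\Q$-algebra (hence a finite product of number fields), is reduced and so has no nonzero nilpotents. As $D_1$ is a field by hypothesis, $x^m$ is therefore a unit of $D_1$, and from $x \cdot x^{m-1} = x^m$ one sees that $x$ is a unit of $E$ with inverse $x^{-1} = (x^m)^{-1} x^{m-1} \in D_1 \cdot D_{\gamma^{m-1}} \subset D_{\gamma^{-1}}$, using $\gamma^{m-1} = \gamma^{-1}$.

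Next I would use this to show that $S = \{\gamma \in \Gamma : D_\gamma \neq 0\}$ is a subgroup of $\Gamma$. It contains $1$ since $D_1$ is a (nonzero) field; it is closed under inversion by the previous paragraph; and it is closed under multiplication, since if $0 \neq x \in D_\gamma$ and $0 \neq y \in D_{\gamma'}$ then $x$ and $y$ are units, so $xy$ is a unit, hence $xy \neq 0$, and $xy \in D_{\gamma\gamma'}$ forces $D_{\gamma\gamma'} \neq 0$. The hypothesis $\Gamma = \langle \gamma : D_\gamma \neq 0 \rangle$ says precisely that $\Gamma = \langle S \rangle$, so $S = \Gamma$; in particular $D_\gamma \neq 0$ for every $\gamma \in \Gamma$, which is needed since otherwise $\dim_{D_1} D_\gamma$ could be $0$.

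Finally I would fix $\gamma \in \Gamma$ and choose a nonzero $x \in D_\gamma$, which by the above is a unit with $x^{-1} \in D_{\gamma^{-1}}$. The map $D_1 \to D_\gamma$, $c \mapsto cx$, is $D_1$-linear (Lemma \ref{B1ring} makes each $D_\gamma$ a $D_1$-module) and injective, as $x$ is a nonzerodivisor. It is also surjective: for $z \in D_\gamma$ one has $zx^{-1} \in D_\gamma \cdot D_{\gamma^{-1}} \subset D_1$, whence $z = (zx^{-1})x \in D_1 x$. Therefore $D_\gamma = D_1 x \cong D_1$ as $D_1$-vector spaces, giving $\dim_{D_1} D_\gamma = 1$.

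I expect the only real content to lie in the first paragraph: the idea of passing to the power $x^m$ so as to land in $D_1$, and invoking reducedness to guarantee that $x^m$ is a \emph{nonzero} element of the field $D_1$, thereby forcing $x$ to be a unit of homogeneous degree $\gamma^{-1}$. Once $E$ is recognized in this way as a ``graded field,'' the remaining steps are formal manipulations with the grading, and I anticipate no obstacle there.
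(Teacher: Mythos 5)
Your proposal is correct and follows essentially the same route as the paper's proof: pass to a power $x^m\in D_1$, use reducedness of the \'etale algebra and the field hypothesis on $D_1$ to conclude that every nonzero homogeneous element is a unit (with homogeneous inverse), deduce that every $D_\gamma$ is nonzero from the generation hypothesis, and conclude via the isomorphism $D_1\to D_\gamma$, $a\mapsto ax$. Your subgroup argument for the support $S$ is just a slight repackaging of the paper's step of writing $\gamma$ as a product of elements with nonzero pieces, so there is no substantive difference.
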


\begin{proof}
Each non-zero homogeneous element has a power in $D_1$.
That power is non-zero, hence a unit.
Thus all homogeneous elements are units.
If $\gamma\in \Gamma$ and $0 \neq x\in D_\gamma$, then the map
$D_1 \to D_\gamma$, $a \mapsto ax$ is an isomorphism of
$D_1$-vector spaces.

To see that each $D_\gamma$ is non-zero, take $\gamma \in \Gamma$
and write it as $\gamma = \prod_{i=1}^r \gamma_i$ with each $D_{\gamma_i} \neq 0$.
For each $i$, choose $0 \neq x_i \in D_{\gamma_i}$.
Then $0 \neq \prod_{i=1}^r x_i \in D_{\gamma}$.
\end{proof} 
 
\begin{lem}
\label{noncycliclem2}
Suppose $A$ is a Dedekind order and 
$A = \bigoplus_{\gamma \in \Gamma} B_\gamma$ is  a $\Gamma$-grading.
Then the order $B_1$ is also Dedekind.
\end{lem}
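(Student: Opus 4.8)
The plan is to reduce the assertion to showing that $B_1$ is integrally closed in its field of fractions. By Lemma~\ref{B1ring}, $B_1$ is a subring of $A$ containing $1$; it is therefore an integral domain, and its additive group, being a subgroup of $A^+\cong\Z^n$, is free of finite rank, so $B_1$ is an order. An order that is an integrally closed domain is automatically a Dedekind domain (it is Noetherian and of Krull dimension $1$, being a torsion-free domain that is finite over $\Z$), so it suffices to prove integral closedness.

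First I would pass to fraction fields. Set $K=A_\Q=\mathrm{Frac}(A)$. Tensoring the grading with $\Q$ yields a $\Gamma$-grading $K=\bigoplus_{\gamma\in\Gamma}(B_\gamma)_\Q$ of the field $K$, whose degree-$1$ piece is the subfield $K_1:=(B_1)_\Q=\mathrm{Frac}(B_1)$. The crucial observation is the identity $A\cap K_1=B_1$ inside $K$: any element of $A$ that lies in $K_1$ is homogeneous of degree $1$ in the graded $\Q$-algebra $K$, and by the uniqueness of the homogeneous decomposition $A=\bigoplus_{\gamma}B_\gamma$ its unique nonzero component lies in the degree-$1$ part $B_1$.

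With this in hand the integral-closure argument is immediate. Let $x\in K_1$ be integral over $B_1$. Since $B_1\subseteq A$, the element $x$ is integral over $A$, and $x\in K_1\subseteq K=\mathrm{Frac}(A)$; as $A$ is a Dedekind domain, it is integrally closed in $K$, whence $x\in A$. Together with $x\in K_1$ this gives $x\in A\cap K_1=B_1$. Therefore $B_1$ is integrally closed in $K_1$, and hence is Dedekind.

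The whole argument is short, and the only point needing care is the identification $A\cap K_1=B_1$: intersecting the order $A$ with the degree-$1$ piece of the graded fraction field must recover exactly $B_1$. This is precisely where the grading structure (rather than an arbitrary subring of $A$) enters, through the uniqueness of homogeneous components; everything else follows from $A$ being integrally closed.
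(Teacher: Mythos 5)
Your proof is correct and takes essentially the same route as the paper: the paper's proof also rests on the identity $B_1 = A \cap (B_1)_\Q$, from which it concludes at once that $B_1$ is the ring of integers of the number field $(B_1)_\Q$. Your write-up simply makes explicit the integral-closure argument (an element of $(B_1)_\Q$ integral over $B_1$ is integral over $A$, hence lies in $A$, hence in $A\cap(B_1)_\Q=B_1$) that the paper leaves implicit.
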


\begin{proof}
We have $B_1 = A \cap (B_1)_\Q$. It follows that 
$B_1$ is the ring of integers of the number field $(B_1)_\Q$.
\end{proof}

Next we prove Theorem \ref{noncyclicthm}.
It suffices to prove that if $p$ is prime and $A = \bigoplus_{\gamma \in \Gamma} B_\gamma$ is a Dedekind order graded by a finite abelian $p$-group $\Gamma$ with each $(B_\gamma)_\Q$ one-dimensional over the field $(B_1)_\Q$, then $\Gamma$ is cyclic. 
To see that this suffices, invoke Lemma \ref{gammamisc}, replace $\Gamma$ by its $p$-primary component (viewing that component either as a subgroup or as a quotient group), and apply  
Lemma \ref{noncycliclem} with $E=A_\Q$.

Let $p$, $A$, $\Gamma$, $(B_\gamma)_{\gamma \in \Gamma}$ be as above 
and let $q$ be the exponent of the $p$-group $\Gamma$. 
By Lemma \ref{noncycliclem2} we have that $B_1$ is a Dedekind order.

Let $\pp$ be a prime ideal of $B_1$ containing $p$. Define the  
ring homomorphism $\phi: A \to A/\pp A$ by $\phi(x)=(x^q + \pp A)$; this is the canonical map $A \to A/\pp A$ followed by the $q$-th powering map from $A/\pp A$ to itself, the latter being a ring homomorphism because $A/\pp A$ contains the finite field $B_1/\pp$ of characteristic $p$. 
The restriction of $\phi$ to $B_1$ is the canonical map $B_1 \to B_1/\pp$ followed by an automorphism of $B_1/\pp$. For each $\gamma \in \Gamma$ one has $(B_\gamma)^q \subset B_1$, so $\phi(B_\gamma)$ lands in the subring $B_1/\pp$ of $A/\pp A$. Since the $B_\gamma$ generate $A$,  the image of $\phi$ in fact lies in $B_1/\pp$, giving the following diagram.
$$
\xymatrix@R1pt{
A \ar^{\phi}[r] \ar[ddr] & A/\pp A \\
\cup & \cup \\
B_1 \ar@{->>}[r] & B_1/\pp
}
$$
Let $\rr = \ker \phi$. Then $\rr$ is a prime ideal of $A$ with $A/\rr \isom B_1/\pp$, 
so $\rr$ lies over $\pp$ with residue class field degree $f(\rr/\pp)=1$. 
Now we consider the familiar formula 
\begin{equation}
\label{familareqn}
\sum_\qq e(\qq/\pp)f(\qq/\pp) = [A_\Q:(B_1)_\Q] = \#\Gamma,
\end{equation}
the sum ranging over the prime ideals $\qq$ of $A$ lying over $\pp$ and $e(\qq/\pp)$ denoting the ramification index; the last equality follows from our assumption on the $(B_\gamma)_\Q$. 
Let $\qq$ be one of those prime ideals. 
For each $x \in \rr$ one has $x^q \in \pp A \subset \qq$, so $x \in \qq$.
This proves $\rr \subset \qq$, hence $\rr = \qq$, since $\rr$ is maximal. 
Thus there is only one $\qq$, namely $\qq = \rr$.  
Formula \eqref{familareqn} 
now becomes $e(\rr/\pp) = \#\Gamma$. 
For each $x \in \rr$ one has $x^q \in \pp A = \rr^{e(\rr/\pp)} = \rr^{\#\Gamma}$, so $q\cdot\ord_\rr(x) \ge \#\Gamma$; here $\ord_\rr$ counts factors $\rr$. 
Picking $x \in A$ such that $\ord_\rr(x) = 1$, then $x \in \rr$ so $q \ge \#\Gamma$. But a finite abelian 
group whose exponent is at least its order is clearly cyclic.
This gives the desired result.

\begin{rem}
Note that instead of requiring that $A$ be Dedekind, it suffices that it be locally Dedekind at all primes dividing its $\Z$-rank.
\end{rem}

\end{document}